\numberwithin{equation}{section}
\begin{document}

\title {A-stabilization and the ranges of complex polynomials on the unit disk}
\author{ Alexey Solyanik }
\date{17 January  2017}
\dedicatory{To  Dmitriy Dmitrishin and Alexander Stokolos}
%\thanks{This work was partially financially   supported by my friends Vadim Czukalov and his wife Irina. Special thanks are due to Karina for her constant support}
\address{Caribbean Sea, Grenada}
\email{transbunker@gmail.com}
\newtheorem{remark}{Remark}
\newtheorem{theorem}{Theorem}
\newtheorem{lemma}{Lemma}
\newtheorem{proposition}{Proposition}
\newtheorem{corollary}{Corollary}
\newtheorem{example}{Example}
\newtheorem*{theorem A}{ Theorem A}
\newtheorem*{theorem 4}{ Theorem 4}
\newtheorem*{caratheodory}{Caratheodory's Lemma}
\newtheorem*{rogosinskii}{Rogosinski's Lemma}
\newtheorem*{duality}{The $p/q$ duality}
\maketitle
\vspace{1 cm}
\begin{abstract}Problems of stabilization of the unstable cycle of one-dimensional complex dynamical system are briefly discussed. 
These questions reduced to the problem of description of the ranges of polynomials $q(z) = q_1z + q_2z^2 +\dots + q_nz^n$  defined in the unit disk and normalized by the conditions $q(1) = 1 $ and this is the main subject of the present paper.
\end{abstract}

\section{Introduction}
In the recent papers \cite{dm1}, \cite{dm2}, \cite{dm3} authors studied the problem of local stabilization of unstable cycle of length $T$ of the given real one-dimensional dynamical system. A typical example is the family of \textit{logistic maps} $g_\lambda (x)=\lambda x(1-x)$ defined on the interval $[0,1]$ for $\lambda \in [1,4]$  and demonstrate a \textit{chaotic dynamics} for a large set of $\lambda $ close to $4$.

In this work we  study this stabilization process  but for the \textit{complex} one-dimensional dynamical system 
\begin{equation}
\label{intr_def}
f : \hat{\mathbb{C}}  \to \hat{\mathbb{C}}  
\end{equation}  
where $\hat{\mathbb{C}}  = \mathbb{C} \cup \lbrace \infty \rbrace$ is the Riemann sphere with spherical metric.  This is of course a \textit{very mild} generalization, and generalization actually was not a target of the present work. Most of the results are new even in the real case, since all of the dynamics of $f$ restricted to $\mathbb{R} $ are contained in the dynamics on $\mathbb{C}$. This notes mainly devoted to the local theory  and the global behaviour will be the subject of the forthcoming paper.

One remark is in order. Since all described theory is mostly devoted to the spectral analysis of the given linear operator, we can apply these results also to the problems of stabilization of unstable cycles for smooth dynamical systems defined on the given  finite dimensional smooth manifold. 
\subsection{Complex dynamical systems}  

A basic question in the theory of dynamical systems is to study the asymptotic behaviour of orbits 
$$
\text{orb}(f,z_0)=(z_0, f(z_0), f^2(z_0), ...),
$$
where $f^n(z)=f(f( ... f(z)))$ and initial point $z_0 \in \hat{\mathbb{C}}$.

A point $s$, such that $f(s)=s$ is called a \textit{stable} (or fixed) point of the map $f$. The local behaviour of the dynamical system at the stable point governed by the eigenvalues of the linear part.  We call a fixed  point $s$ \textit{attracting } if the \textit{multiplier} $\mu  =f'(s)$ satisfied $\vert \mu \vert<1$. Then all orbits of $f$ convergence uniformly  to $s$ on the some neighbourhood of $s$ and point $s$ is also called \textit{asymptotically stable}.

A point $s$ is called \textit{periodic }if $f^T(s)=s$ for some $T$. The minimal  $T$ is its period and the orbit $\mathcal{O}=(s, f(s), f^2(s), ... f^{T-1}(s))$ is called a \textit{cycle}. We say that the cycle $\mathcal{O}$  is attracting when the \textit{multiplier of the cycle} $\mu ( \mathcal{O}) =(f^T)'(s)$ satisfied $\vert \mu  \vert < 1$.

For precise definitions and background theory of complex dynamics we refer to the books \cite{beardon}, \cite{carleson} or \cite{milnor} but in this work we will use only elementary facts from the theory. 

As a simplest (but typical) example of the complex (or holomorphic)  dynamical system (\ref{intr_def}) one can imagine iteration of $f(z)=z^2$.
 
It is clear that $s=1$ is a \textit{repelling} fixed point for this system, which means that $f(s)=s$ and, since $f'(1)=2$, all points close enough to $s$ repel by the map $f$ away from the point $s$.   Our goal is to  \textit{stabilize} this system (at least locally) at the point $s$ in such a way that all  points  close enough to $s$ became \textit{attracted} to the point $s$.  

Before we involve to the stabilization problems, we shall shortly describe the global dynamics of $f(z)$. Since $ f^n(z) = z^{2^n}$,  initial points $z_0$ with $\vert z_0  \vert< 1$ tends to $0$ and initial points  $z_0$ with $\vert z_0  \vert > 1$ tends to $\infty$. This means that this system has two attracting fixed points, namely $0$ and $\infty$ and one repelling fixed point $1$. This is a reflection of general fact, that any rational function of degree $d$ has precisely $d+1$ fixed points on $\hat{\mathbb{C} }$ counting like zeros of equation $f(z)-z=0$ (see \cite{beardon}, p. 40).

What about points on the circle $\partial \Delta= \lbrace z : \vert z \vert =1 \rbrace $, we have to say that their dynamics is quite complicated and we still do not know answers to a very simple questions.

For instance, it seems unknown any accumulated point of the sequence $\sin 2^n=  \Im (f^n(e^i))$.

Meanwhile, of course, all initial points with $\vert z_0  \vert = 1$ not leave the unit circle and even more -- the circle is both forward and backward invariant under $f$ (that is each point on $\partial \Delta$ has its entire history and future lying on $\partial \Delta$). Loosely speaking,
a unit circle looks like an unstable 'orbit' (actually invariant set) for this dynamical system. Unstable, since if some iteration $f^n(\zeta)$ fall from the unit circle inside or outside of the circle (for instance by the reason of approximation) it \textit{never} return back.

But the main difference with periodic or quasi-periodic motion is that  the dynamics of $f(z) = z^2$ on the unit circle is chaotic. The word \textit{chaotic} has many meanings, which we briefly discus here.

First of all this means that the dynamics is \textit{sensitive to the initial conditions}, i. e. two close to each other initial points $z_0$ and $w_0$ after some number of iterations produce a later changes that grows exponentially with  $ n$ (exponentially means (\ref{intr_exp})).  

It  implies that long term predictions of the chaotic system is almost impossible despite the deterministic nature of the equation.
For \textit{generic} initial points their trajectories, which are at first very close, later diverge more and more rapidly until they no longer have anything to do with each other.

Next, the dynamics is \textit{topologically transitive}, which means that some point (and in fact any generic point ) has everywhere dense orbit in $\partial \Delta$ (see \cite{milnor}, p. 51). 

Since the equation $z^{2^T} = z$ for every $T$ has $2^T-1$ roots, which are equally distributed on the circle, any such points is periodic with period at least $T$ (but the least period could be less of course). Multipliers of these  cycles $\mu =(f^T)'(z)= (z^{2^T})'$ satisfies $\vert \mu \vert > 1$,
and hence they are all \textit{repelling cycles}. We shall claim also, that for each natural number $T$ there are periodic points with exact period $T$ ($\eta_T=\exp( \frac{2 \pi i}{2^T-1})$) and hence this dynamical system has periodic orbits of any order.

The most common definition of \textit{chaotic behaviour} is to say  that $f$ has \textit{sensitively dependence on the initial conditions}, $f$ is \textit{topologically transitive}  and \textit{periodic orbits are dense}(\cite{devaney}, p. 269), which are all satisfied by the system $f(z)=z^2$ on the invariant set $\partial \Delta$ . In fact, for compact infinite metric spaces and continues transformations, the last two conditions implies the first one.

We conclude our brief description of chaotic behaviour pointed out that the map $f(z)=z^2$  \textit{on the unit circle} clearly demonstrates two key features of chaos --- \textit{stretching and folding}: the map stretched the unit circle in $2$ times and $2$ times folded it. Loosely speaking, stretching mechanism is responsible for sensitivity to initial conditions while the folding mechanism is responsible for topological mixing (topological transitivity and density of periodic orbits).

Thus, in this case the so called  \textit{Fatou set} is $\hat{\mathbb{C}}\setminus \partial \Delta$, with the regular dynamics of $f$ and the \textit{Julia set }is the unit circle $\partial \Delta$,  where dynamics is chaotic (and ergodic with respect to the one-dimensional Lebesgue measure on the circle).

Actually this is a general case at least for all rational functions $f$ with degree not less than $2$ -- we have partition of the Riemann sphere into two disjoint invariant sets, on one of which, so called \textit{Fatou set}, $f$ is well-behaved as dynamical system, on the other of which, the \textit{Julia set}, $f$ has chaotic behaviour, i.e. sensitive to the initial conditions, topologically transitive and repelling periodic points are dense. Excluding some exceptional cases, Julia set is a fractal set with Hausdorff dimension bigger than $1$ and corresponding dynamics is ergodic on the Julia set with respect to the corresponding Hausdorff measure.

We note that \textit{any holomorphic map} from $\hat{\mathbb{C}} $ to  $\hat{\mathbb{C}} $ is the \textit{rational function} $R$ and any rational function demonstrate interplay between \textit{expanding} and \textit{contracting} features, which is the main reason of complicated dynamics.

Indeed,  any rational function of degree $d$ folding   $d$ times the Riemann sphere and hence, \textit{in average}, expanding space in $d$ times. On the other hand it has $2d-2$ critical points, counting multiplicity, where $R'(z )=0$ (or $R$ has a pole of order two and higher) and hence $R$ highly contracting local neighbourhoods of those points (see e. g. \cite{mcmullen})

However we have keep in mind, that for some rational functions $f$ the Julia set $J(f)$ equal   $\hat{\mathbb{C}}$, the fact first discovered by Ernst Schroder in 1871 and then rediscovered in greater generality by Samuel Late in 1918 (\cite{milnor}, p. 70). The simplest example is $f(z)= 1-2/z^2$ (see \cite{beardon}  p.76).

If $J(f)\neq \hat{\mathbb{C}}$ then it is a closed  set without isolated points and  with empty interior and Fatou set is open set with at most countably many open components $\Omega$, such that $f(\Omega)$ is also open component of Fatou set. Some times a given component $\Omega$ is stable, i. e. $f(\Omega)=\Omega$ (like in our example $f(z)=z^2$) and some times not (like for $f(z)=z^2-1$). 

In the latter case the corresponding Julia set is the typical fractal set called the Basilica di San Marco after John Hubbard or shortly \textit{basilica }  and Fatou set has infinitely many open components $\Omega$ (\cite{beardon}, p. 13). The component $\Omega_0$ containing point $0$ is periodic with period $2$, i. e. $f^2(\Omega_0)=\Omega_0$, as well as component $\Omega_{-1}$ containing point $-1$. This is because points $0$ and $-1$ form a $2$-cycle. Other components $\Omega$ are eventually periodic, i. e. component became periodic with period two after some number of iterations. The component $\Omega_{\infty}$, contained $\infty$ is stable, i. e. $f(\Omega_{\infty})=  \Omega_{\infty}$.

According to the famous theorem of Dennis Sullivan, solved  the 60 year old Fatou - Julia problem on wandering
domains (\cite{beardon}, p. 176), this is a general case --- there is no wandering components in Fatou set and  every component $\Omega$ is eventually periodic (or stable). 

Points of the Fatou set is exactly points which are stable in the Lyapunov sense, i. e. for every  point $z_0$ from the Fatou set every  point $w_0$ close enough to the point $z_0$ remains uniformly close for all iterations. Hence there is no repelling cycles in the Fatou set and all repelling cycles are in the Julia set (see \cite{beardon},  p. 109).

The number of components of the Fatou set could be 0, 1, 2 or $\infty$ (\cite{carleson}, p. 70). For the Latte maps there is 0 components, for $f(z)=z^2-2$ one, for $f(z)=z^2$ two and for $f(z)=z^2-1$ infinitely many.

After this short excursion to the complex dynamics we shall return to the main subject of the article --- the local stabilization of the given dynamical system near repelling fixed point or repelling cycle.

\subsection{A-stabilization}  

Let $\mathcal{O}$  be a unstable cycle of the dynamical system $f : \hat{\mathbb{C} }\to \hat{\mathbb{C} }$. Following \cite{dm1} we use some averaging procedure to stabilize this dynamical system near $\mathcal{O}$.

The idea of stabilization procedure is well-known and simple. In Control Theory it calls \textit{feed-back control}. We measure  some previous states (\textit{trajectory}) of the given  dynamical system near given unstable cycle and then add a \textit{control} which correct the next state. The control procedure should be of course independent  of the given initial state of the system. This means that it should be the same for all initial states from the some (small enough) neighbourhood of the unstable cycle. 

To clarify the ideas we start with the  description of stabilization of given unstable  fixed point $s$ (i. e. cycle of length one).

 Let $z_0$ be a point close enough to $s$. This is an initial state.  Consider the first $n$ points of the orbit, i.e. $z_0, f(z_0),   f^2(z_0), \dots f^{n-1}(z_0)$, which we call $z_0, z_1, z_2, \dots z_{n-1}$ and this is a part of the trajectory of $z_0$.
Next point of this orbit, namely $f(z_{n-1})$ could lie far enough from the desired fixed point $s$. Actually, for $f(z) = z^2$ with the multiplier $\mu = f'(1) = 2$ we have
\begin{equation}
\label{intr_exp}
\text{dist}(f(z_{n-1}), 1) \asymp \exp(n \log 2)\text{dist}(z_0, 1)
\end{equation}
at least for $z_0$ close enough to $1$ and numbers $n \ll  \log_2( \text{dist}(z_0, 1))^{-1}$ . This means that point $s$ is unstable and now our aim is to stabilize it.

Let $\langle a \rangle = \lbrace a_1, a_2, \dots , a_{n}\rbrace$ be an averaging set of complex numbers which will be chosen later. Let $z_0$ is a starting point, which is close enough to $s$ and   $(z_0, z_1, \dots, z_{n-1})$  be the first $n$ points of orbit.

Define the \textit{new point} $z_n^*$, which we still denote as $z_n$ later, by the rule
$$
z_n^* = f(z_{n-1}) + \text{control} = a_1 f(z_{n-1}) + a_2 f(z_{n-2}) + \dots  + a_nf(z_0)
$$
in such a way, that new one $z_n^*$ lie more close to $s$ than the old one $z_n=f(z_{n-1})$.

Continue in the same way and define for $m = n +1,  n+2, \dots $ the \textit{new trajectory }by the rule
$$
z_{m} =a_1 f(z_{m-1})+a_2f(z_{m-2})+\dots +a_{n} f(z_{m-n})
$$
where all points $z_k$ are points of the new trajectory, which coincide (generally)  with the old one only at points $z_0, z_1, \dots, z_{n-1}$.
We call this stabilization process corresponding to the given set $\langle a \rangle = \lbrace a_1, a_1, \dots, a_{n}\rbrace$ the \textit{$\langle a \rangle $-stabilization}.

We would like to stress out here that $z_n$ is a \textit{new} point and not coincide with the $n$-th point of the \textit{old} orbit. It have to be clear also that the described process is \textit{not a process of averaging} of the given (old) orbit of the dynamical system, rather the \textit{process of producing} completely new orbit.

Actually, this new system is not any more a dynamical system, it is a \textit{difference equation of $n$-th order}. Philosophically speaking, a dynamical system has \textit{no memory} and hence more easy bifurcate to the chaotic regime. When we stabilize the system by the use of memorialised  coordinates it transforms to the system \textit{with memory } and demonstrate more regular local behaviour.

Suppose that we successfully achieved our goal and $z_m \rightarrow  s$  and let $s \neq  0$. Then for all $m$ big enough and $k = 0, 1, \dots,  n -1$ we have $f (z_{m-k} ) \approx f (s) = s$ and hence
$$
s\approx z_m \approx (a_1 +a_2 + \dots +a_{n}) f(s)=(a_1 +a_2 + \dots +a_{n})s
$$
It follows that necessary condition on the set $\langle a \rangle$ is
\begin{equation}
\label{int_a}
a_1 +a_2 + \dots + a_{n} = 1
\end{equation} 

Define the polynomial corresponding to the averaging set $\langle a \rangle$ by
\begin{equation}
\label{int_pol}
p(z)=a_n+ a_{n-1}z+ \dots + a_1z^{n-1}
\end{equation}

We stress out that coefficients now are in the reverse order.

Now the necessary condition (\ref{int_a})  reads as $p(1) = 1$ and we always assume that $a_1\neq 0$.
Denote the set of all such polynomials by $\mathcal{P}_n$  and by $\mathcal{P}_n^\mathbb{R}$  the set of such polynomials
with real coefficients.

For the cycle stabilization one can use the following generalization of $\langle a \rangle$-stabilization, which we call \textit{$\langle a,T \rangle$-stabilization (of $T$-cycle)}.

Let the dynamical system has an unstable (repelling) cycle $ \mathcal{O}= (s_1, s_2, \dots , s_T )$ of the length $T$. Then the $\langle a,T \rangle$-stabilization of $T$-cycle defined as follows
$$
z_m =a_1 f(z_{m-1})+a_2f(z_{m-1-T})+\dots + a_{n} f(z_{m-1-(n-1)T})
$$
with long enough initial points of trajectory to start the process.

Using time-delayed coordinates one can rise to a new (and now indeed \textit{dynamical}) system $F : \mathbb{C}^{(n-1)T+1} \to \mathbb{C}^{(n-1)T+1}$.  It can be shown that after linearisation of $F$ near the point of the cycle the stabilization problem reduced to the position of roots of the polynomial
\begin{equation}
\label{int_chi}
\chi_T(z)=z^{(n-1)T+1}-\mu p^T(z)
\end{equation}
where
$$
p(z)=a_n+ a_{n-1}z+ \dots + a_1z^{n-1}
$$
is the corresponding polynomial for the $\langle a \rangle$-set and 
$$
\mu=\mu(\mathcal{O} )=f'(s_1)f'(s_2) ... f'(s_T)
$$ is  the multiplier of the cycle.

We do not posses here an explanation of these definitions and proofs of previous statements and refer to the recent papers (\cite{dm2}, \cite{dm3}), where this matter discussed in details.

Thus from now the problem of $\langle a,T \rangle$-stabilization has a pure algebraic context -- for the given natural number $T$ and complex number $\mu$ to find some averaging set  $\langle a \rangle$  in such a way that all roots of polynomial $\chi_T(z)$ lie in the unit disk $\Delta=\lbrace z : \vert z \vert < 1 \rbrace$.

We fix the \textit{definition}  that the (holomorphical) system $f : \hat{\mathbb{C}} \to \hat{\mathbb{C}}$ admits \textit{$\langle a,T \rangle$-stabilization } of  unstable cycle $\mathcal{O}$ with multiplier $\mu(\mathcal{O} )=\mu$ if this is the case, i.e. if all roots of corresponding polynomial $\chi_T(z)$ lie in the unit disk.

Here and through the paper $T$ is the natural number which is equal to the length of the cycle and $\mu$  is the complex number which is equal to the multiplier of the cycle.

We show that for every \textit{complex} number $\mu$ (which lie outside of the unit disc, i.e. the stabilized cycle should be repelling cycle) we can choose the finite set of \textit{complex} numbers $\langle a \rangle$ in such a way, that all roots of $\chi_T(z)$ lie in the unit disc $\Delta$.

Hence every dynamical system $f : \hat{\mathbb{C}} \to \hat{\mathbb{C}}$ with  the repelling cycle $\mathcal{O}$ could be  $a$-stabilized near this cycle. This means that for any initial condition $z_0$ close enough to $s$ the new  ($ a $-stabilized) orbit $ \text{orb}_a (z_0)=(z_0, z_1, z_2  ...)$ tends to the cycle exponentially fast.

For instance, as we show later, for our simple example $f(z) = z^2$ with $\mu=2$ at the fixed (but repelling) point $s=1$ it is impossible to find $3$ numbers $a_1, a_2, a_3$ in such a way, that $\chi_1(z)$ has all it roots inside of the unit disk $\Delta$, but we can choose $4$ numbers to reach the desired aim.
Namely these numbers (not unique but in some sense the best) are
$$
a_1=2-\sqrt{2} +i \sqrt{2}, ~~~~a_2=3(\sqrt{2}-1) -i3(\sqrt{2}-1) 
$$
$$
 a_3=-2(\sqrt{2}-1)-i 2(3-2\sqrt{2}),~~~~~ a_4=i(3-2\sqrt{2})
$$
and hence dynamical system $f(z)=z^2$ became asymptotically stable at the fixed point $1$ after $\langle a\rangle$-stabilization with $\langle a \rangle = (a_1, a_2, a_3, a_4) $.

We will show in the  section \ref{sec_formalism} that we \textit{can not} choose the real $a_k$ in the case when multiplier is real and bigger than $1$.

Let now $f(z)=z^2-2$, which  has two (finite) fixed points, namely $s=-1$ with multiplier $\mu=f'(-1)=-2$ and $s=2$ with multiplier $\mu=f'(2)=4$. Hence both fixed points are repelling.

At the point $s=-1$ the multiplier  $\mu=-2$ which has the same modules as multiplier in previous example but opposite sign.  Now \textit{we can} choose $2$ \textit{real }$a_k$ for the local $a$-stabilization near the point $-1$:
$$
a_1=\sqrt{3}-1, ~~~ a_2=2-\sqrt{3}, 
$$

These examples shows that the minimal length of the set $\langle a \rangle$, which we denote by $\text{ord} \langle a \rangle$  depends not only of the \textit{magnitude} of multiplier, but rather of it \textit{position} on the complex plane.

\subsection{Stabilization domain and $p/q$-duality } 

For the given polynomial $p \in \mathcal{P} _n$  we define it \textit{stabilization domain for $T$-cycle} by 
$$ 
S^T(p)=\lbrace \mu: \text{all roots of}~~ \chi_T(z) ~~\text{lie in}~~ \Delta  \rbrace
$$

This set describe the set of values of multiplier of the given cycle of length $T$ for which this cycle could be asymptotically stabilized via chosen $\langle a, T \rangle$-stabilization process. For the given family of polynomials $\mathcal{P}$ we define it stabilization domain for $T$-cycle
by
$$
S^T(\mathcal{P})=\bigcup_{p\in \mathcal{P}} S^T(p)
$$
We denote by $S^T_n$ the stabilization domain for $T$-cycle for the family $\mathcal{P}_n$.

The set $S^T_n$ describes the largest domain of those $\mu$ for which we can find $\langle a, T \rangle$-stabilization process of the order (length) $n$ asymptotically stabilizing a given cycle of the length $T$ with the multiplier $\mu$.
Often we just omit the upper index $T$ when it equal to one. Thus, for instance, we will write $S_n$ instead of $S^1_n$.

In the section \ref{sec_S_n} we give purely geometrical description of the set $S_n$. It turns out that $S_n$ is an open set bounded by the sinusoidal spiral (we refer to \cite{yates}, p. 214 for definition) with the one punctured point inside.  Namely, for instance, $S_2$ is bounded by \textit{cardioid} and $S_3$ by  \textit{Cayley's sextic} (with the punctured point $\lbrace 1 \rbrace$). 

Since $S_n$ is a union of all $S(p)$ for $p\in \mathcal{P}_n$   it is useful to find some subfamily of simple polynomials $p_\alpha$, such that $S(p_\alpha)$ also cover $S_n$. We show that for polynomials of very special kind their stabilization domains $S(p_\alpha)$ indeed cover all set $S_n$. This gives the practical rules to design the $a$-stabilization sets.

In the section \ref{sec_formalism} we describe so called $p / q$ -duality, which allow us to reformulate the problem of roots position of $\chi_T(z)$ in $\Delta$ to the problem of image position of $q(\bar{\Delta})$, where $q(z)=z(p^*(z))^T$ and $p^*(z)$ is the inverse polynomial of $p(z)$.

This leads to the simple practical rule: \textbf{\textit{the set $ \langle a \rangle $ stabilize locally any dynamical system with the multiplier $\mu$  if and only if $q(z)=a_1z+a_2z^2+ \dots +a_nz^n$ omit point  $1/\mu$ in $\bar{\Delta}$.}}

\subsection{Stabilization of a family of maps}

Usually, for the given family of dynamical systems $f_c(z)$ parametrized by the parameter $c$ we have a partition of the parameter space into disjoint regions in such a way that dynamics of the iterates of $f_c$ in this regions display essentially the same features, while as $c$ passes from one region to another, some significant change in the dynamics (\textit{bifurcation}) take place.

For instance, for the family of quadratic polynomials $f_c(z) = z^2 + c$, the partition of the \textit{parameter plane }$\mathbb{C}$ leads to the \textit{Mandelbrot set }
$$
\mathcal{M}=\lbrace c: f^n_c(0)~~~ \text{bounded when} ~~~n \to \infty \rbrace
$$

This extremely popular mathematical object (also in the non-mathematical world) is now well understood, except probably one, but central question of the theory --- is it true that $\mathcal{M}$ is \textit{locally connected}.

We recall that a \textit{hyperbolic component} $\mathcal{H} $ of the set $\mathcal{M}$ is an open connected component of the set of parameters $c$, such that$f_c$ has an attracting  cycle $\mathcal{O}_c$.

It is known that for every $c\in \mathcal{H} $ the attracting cycle $\mathcal{O}_c$ has constant period through $\mathcal{H} $ and  $\mathcal{O}_c$ moves holomorphically through the parameter $c$ moves in $\mathcal{H} $. Thus in hyperbolic components the behaviour of $f_c(z)$ is \textit{structurally stable} or \textit{robust}. 

The question of whether there exist non-hyperbolic components is open and essentially goes back to the classical work of Pierre Fatou (1920). 

On the other hand in hyperbolic components the corresponding repelling cycles $\mathcal{O}_c$ has also a constant period through $\mathcal{H} $ and lie in the corresponding Julia sets.

Previous  observations rise to the next problem in $a$-stabilization --- to find \textit{one} $a$-stabilization process for the given \textit{domain of parameters} or, in other words, for the given \textit{domain of multipliers}.

As a simple example consider the family of quadratic polynomials $f_c(z)=z^2+c$, where a parameter $c$ lie in the one of hyperbolic components of Mandelbrot set, let say in the main cardioid. Then every $f_c$ has $2$ fixed points, one attractive and one repelling. At the repelling fixed point as parameter moves through the main cardioid, the corresponding multiplier describe the set $M=\lbrace z : \vert z-2 \vert <1\rbrace$ which is a circle. Thus to stabilize every dynamical system from this hyperbolic component by the one stabilization process (or in other words by the one set $\langle a  \rangle$ ) we have to find one polynomial $p(z)$, such that $M \subseteq S(p)$.

Now by the $p/q$-duality we reduce this question to the problem of finding a polynomial $q(z)$ of the smallest degree, such that $q(0)=0$, $q(1)=1$ and $q(z)$ omit the set $M^*=\lbrace w: 1/w \in M\rbrace$  when $z\in \bar{\Delta}$. It is easy to see that in this particular case $M^*=\lbrace w: \vert w-2/3 \vert<1/3 \rbrace$  is a circle. 

In other words we have to find a polynomial $q(z)$, such that $q(\bar{\Delta})\subseteq \Omega_M$, where $\Omega_M=\hat{\mathbb{C} }\setminus M^*$, which is the sphere with the hole. We shall to stress out that in this particular case the set $\Omega_M$ is \textit{closed} (but not a simply connected). Hence there also no obstructions to find the extremal polynomial in question.  

After we find polynomial $q(z)$ it coefficients  give us the desired set $\langle a \rangle$ which stabilise every system $f_c(z)=z^2+c$ at the repelling fixed point for every value of parameter $c$ from the main cardioid.  This how it works and now we consider the general situation.

Let $M$ be a subset of the complex plane and $ \lbrace f_c \rbrace_{c\in \mathcal{X}}$ be a given family of dynamical systems, such that every $f_c$ has an unstable cycle $\mathcal{O}_c=(s_c, f_c(s_c), f_c^2(s_c), ... f_c^{T-1}(s_c))$ with the multiplier of the cycle $\mu ( \mathcal{O}_c)\in M$. 

We shall to find one polynomial $p(z)$ of the smallest degree, such that $M\subseteq S^T(p)$. Coefficients of this polynomial is the corresponding set $\langle a \rangle$ which $\langle a, T \rangle $-stabilize every unstable cycle $\mathcal{O}_c$.

According to the $p/q$ -- duality this problem equivalent to the problem of description of the range $q(\bar{\Delta})$ for polynomials $q(z)$ satisfied some conditions, namely to find polynomial  $q(z)=z(p^*(z))^T$ of \textit{the smallest degree,} such that $q(1)=1$ and $q(z)$ omit in the closed unit disc some prescribed set of values $M^*$.

Our approach to the solution be as follows. First we show that the range restrictions implies estimates of (Taylor) coefficients of polynomial $q(z)$. This is a general principle of Geometric Function Theory. From these estimates and normalization condition $q(1)=1$ we can get  an  estimates \textit{from below } of the degree of the polynomial $q(z)$.

For simply connected domain $\Omega_M=\mathbb{C}\setminus M^*$ we shall give also the different approach based on the subordination principle and growth estimates near the boundary of the Riemann function $g_{\Omega_M} : \Delta \to \Omega_M$
conformally mapping $\Delta$ onto $\Omega_M$. 

This approach lead to the degree estimate from below for polynomial $q(z)$ for \textit{any simply connected } set $\Omega_M$.

In order to find concrete sequence $\langle a \rangle $ which stabilize the given set of unstable cycles, we have to find polynomial $q(z)$ of the smallest degree which omit prescribed set of values in $\bar{\Delta} $, or in other words it image $q(\bar{\Delta})$  lie in the prescribed set $\Omega_M$

There is a some different  approaches to solve this problem. 

First one is based on some classical results of Ted Suffridge (\cite{suffridge} ) and can be applied for slit domains $\Omega_M$. This lead to the extremal $\langle a,1 \rangle $ and $\langle a, 2 \rangle $ stabilization sets for real systems (see also \cite{dm1} and \cite{dm2}).

Second one is based on the  result of V. Andrievski and S. Ruscheweyh (\cite{andru} from  Approximation Theory, which we call Theorem A.  We use Theorem A to construct polynomial which subordinate to $g_{\Omega_M}(z)$ and superordinate  to $g_{\Omega_M}((1-\frac{c}{n})  z)$.  After normalization this polynomial became the desired polynomial $q(z)$.

Third way based on the theory of maximal range of A.  Cordova and S. Ruscheweyh (\cite{corumr1},  \cite{corumr2}) and give rise to the extremal polynomials in question. But this approach  has disadvantage that it works only for very special kinds of domains $\Omega_M$. At least is perfectly works for slit domains and circular domains.   But the extremal solutions are of a little interest in the questions of stabilization since they are very sensitive to the averaging coefficients choose --- they could loose their stabilization properties after a little change in coefficients. This why we do not consider in these notes extremal solutions.

We now briefly summarize some concrete results. 

Let  $ \lbrace f_c \rbrace_{c\in \mathcal{X}}$ be a given family of dynamical systems, such that every $f_c$ has an unstable cycle $\mathcal{O}_c=(s_c, f_c(s_c), f_c^2(s_c), ... f_c^{T-1}(s_c))$ of length $T$ with the multiplier of the cycle $\mu ( \mathcal{O}_c)\in M$, where $M$ is a given subset of the complex plane $\mathbb{C}$.

Denote by $\mu_M=\sup \lbrace \vert z  \vert : z \in M \rbrace$ the \textit{size  } of the multipliers set $M$. It is clear that the length of the stabilization sequence $\langle a \rangle$  have to depend of the size of $M$. But it also deeply depends of the \textit{shape} of $M$ and almost not depend of the length of the cycle.

For the \textit{one point} set $M=\lbrace \mu \rbrace$,
where $\vert \mu \vert \geq  1$ and  $\mu \neq 1$ we can  always  find  a stabilization set $ \langle a \rangle$ of the order 
\begin{equation}
\label{log}
\text{ord} \langle a \rangle  \asymp \log \mu_M
\end{equation}
and this estimate is the best possible in  order.

For the real segment  $M=\lbrace z : \Im z =0, -\mu_M \leq \Re z \leq -1 \rbrace$, where  $\mu_M>1$ we can always find the \textit{real} stabilization set $\langle a \rangle$, such that 
\begin{equation}
\label{sq}
\text{ord} \langle a \rangle  \asymp \sqrt{\mu_M}
\end{equation}
and again this estimate is the best possible in order.

For the (left) horocycle  $M=\lbrace z : \vert z+\mu_M/2 \vert \leq \mu_M/2$ we can always find the real stabilization set $\langle a \rangle$, such that
\begin{equation}
\label{one}
\text{ord} \langle a \rangle  \asymp \mu_M
\end{equation}
and again this estimate is the best possible in order.

Let now  $M=\lbrace z ; \vert z \vert \leq \mu_M, | \arg \mu | \geq \theta \rbrace $ is a (wide) sector of a (big) circle. This set $M$ (in some sense) is  the largest domain of admissible multipliers, since the necessary condition on the set $M$ to satisfy the inclusion $ M \subseteq  S^T(p) $ is that the set  $\hat{\mathbb{C}}\setminus M$ have to contain some open subset  connected points $1$ and $\infty$ on the Riemann sphere $\hat{\mathbb{C}}$.

As we show later for this set $M$ we can find a real stabilization set  $\langle a \rangle$, such that 
\begin{equation}
\label{exp}
\text{ord} \langle a \rangle  \asymp c(\theta) \exp \mu_M
\end{equation}
and the estimate is the best possible in order. We stress out that even for  $\theta=\pi/2$ we can get only  exponential order of the length of the stabilization set.

We shall to point out  that in this notes we do not use  any specific properties of complex dynamical systems and all this results are true as for a complex as well for a real dynamical systems. The only difference is that for the real systems the set of multipliers is also a \textit{real}  subset of the complex plane as well as a stabilized sequence $\langle a \rangle$.

In the forthcoming article we give an applications of previous results and describe the global dynamics of two typical complex dynamical systems $f_0(z)=z^2$ and $f_{-2}(z)=z^2-2$ after $a$-stabilization. These examples corresponds to the 'center' point $c=0$ of the Mandelbrot set $\mathcal{M}$  with $J(f_0)=\partial \Delta$ and to the (left) extreme point $c=-2$ of $\mathcal{M}$ with $J(f_{-2})=[-2,2]$.

For the family of real dynamical systems $g_\lambda(x)$, mentioned at the beginning, this two examples  corresponds by the formula
 $$
 c=\frac{\lambda}{2}(1-\frac{\lambda}{2})
 $$
to the \textit{main trunk} ($c=0$ or $\lambda=2$) of bifurcation diagram, where dynamics is \textit{completely regular} and to the \textit{top of the bifurcation tree} ($c=-2$ or $\lambda=4$), where dynamics is \textit{completely chaotic}.

\section{The set $S_n$ }  
\label{sec_S_n}
We recall, that $S_n$ is the set of the  multipliers  at the unstable fixed point such that  we can $\langle a \rangle$-stabilise the system  near this fixed point by the averaging process of length at most $n$. Next theorem give the explicit formula for this set.

Here and through the paper we denote by $g(E)$ the image of the set $E$ by the map $g$, i.e. $g(E)=\lbrace g(z) : z \in E \rbrace$.
\begin{theorem}
\label{theo_1}
$S_n=u_n(\Delta)$, where $u_n(z)=1-(1-z)^n$
\end{theorem}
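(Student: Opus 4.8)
The plan is to unravel the definition of $S_n$ directly. Recall that $\mu \in S_n$ means there is a polynomial $p(z) = a_n + a_{n-1}z + \dots + a_1 z^{n-1}$ with $p(1)=1$ (and $a_1 \ne 0$) such that all roots of $\chi_1(z) = z^n - \mu\, p(z)$ lie in $\Delta$. Since $\chi_1$ is monic of degree $n$ (assuming $a_1 \ne 0$, so the $\mu p$ term does not raise the degree), writing its roots as $z_1,\dots,z_n$ we have $\chi_1(z) = \prod_{k=1}^n (z - z_k)$, and comparing with $z^n - \mu p(z)$ gives $\mu\, p(z) = z^n - \prod_{k}(z-z_k)$. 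Evaluating at $z=1$ and using $p(1)=1$ yields $\mu = 1 - \prod_{k=1}^n (1 - z_k)$. So every $\mu \in S_n$ has the form $1 - \prod_k (1-z_k)$ for some points $z_k \in \Delta$; conversely, given any $w \in \Delta$, set all $z_k$ equal, solving $(1-z_1)^n = 1-w$ for a root $z_1$ of modulus $<1$ (possible since $|1-w| $ can be any value the $n$-th power map hits from the disk of radius $<$ something — more carefully, pick $z_1 = 1 - (1-w)^{1/n}$ with an appropriate branch so that $|1-z_1|<1$, which holds because $|1-w|<\dots$), then $p(z) = z^{n-1}$ after rescaling...

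Actually the cleanest route: I would show the two inclusions for the set $S_n = u_n(\Delta)$ with $u_n(z) = 1 - (1-z)^n$ as follows. For $S_n \subseteq u_n(\Delta)$: if $\mu \in S_n$ with roots $z_1,\dots,z_n \in \Delta$ of $\chi_1$, the identity above gives $\mu = 1 - \prod_{k=1}^n(1-z_k)$. Each factor satisfies $1 - z_k \in 1 - \Delta$, the disk of radius $1$ about $1$; I need $\prod_k (1-z_k) \in 1 - u_n(\Delta) = (1-\Delta)^n$ (the image of the radius-$1$ disk about $1$ under $w \mapsto w^n$). This requires knowing that a product of $n$ elements of the disk $D(1,1)$ lies in $\{w^n : w \in D(1,1)\}$; this is the key geometric fact and I expect it to be the main obstacle. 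One verifies it by noting $D(1,1) = \{w : \operatorname{Re}(1/w) > 1/2\}$ (for $w \ne 0$), or via the substitution $w = 1-z$, $z \in \Delta$, and an argument/modulus estimate showing $|\arg \prod(1-z_k)| $ and $|\prod(1-z_k)|$ stay within the range swept by $w^n$, $w\in D(1,1)$. For the reverse inclusion $u_n(\Delta) \subseteq S_n$: given $\mu = u_n(w)$ with $w \in \Delta$, I would exhibit an explicit $p$. Take $p(z) = z^{n-1}$ — but that has $p(1)=1$, and then $\chi_1(z) = z^n - \mu z^{n-1} = z^{n-1}(z - \mu)$, whose roots are $0$ (order $n-1$) and $\mu$; this is in $\Delta$ only if $|\mu|<1$, too weak. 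Instead choose roots $z_k = z_0$ all equal with $(1-z_0)^n = 1-\mu$, $|1-z_0|<1$; then $\chi_1(z) = (z-z_0)^n = z^n - \mu p(z)$ forces $\mu p(z) = z^n - (z-z_0)^n$, a polynomial of degree $n-1$ vanishing appropriately, and $p(1) = (1 - (1-z_0)^n)/\mu = \mu/\mu = 1$, with $a_1 = nz_0/\mu \ne 0$ provided $z_0 \ne 0$ (handle $\mu = 0$, i.e. $z_0$ could be taken nonzero as long as $w \ne 0$; the case $w=0$ gives $\mu = 0$, trivially stabilizable). So such $p \in \mathcal{P}_n$ exists and $\mu \in S_n$.

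Assembling: the two inclusions give $S_n = u_n(\Delta)$. The essential content is the pair of reciprocal facts that (i) the monic polynomial $\chi_1$ of degree $n$ factors over $\Delta$ exactly when $\mu = 1 - \prod(1-z_k)$ with all $z_k \in \Delta$, which is immediate from comparing leading behavior and evaluating at $1$, and (ii) the set $\{\prod_{k=1}^n w_k : w_k \in D(1,1)\}$ equals $\{w^n : w \in D(1,1)\} = D(1,1)$ composed with the power map, hence $\{\mu : 1-\mu \in \text{that set}\} = u_n(\Delta)$. I would prove (ii) by the half-plane description of $D(1,1)$ under inversion, or simply observe that the map $z \mapsto 1-z$ carries $\Delta$ onto $D(1,1)$ and that the extremal configuration for both bounding the modulus and the argument of the product is the diagonal one $w_1 = \dots = w_n$, by a standard convexity/logarithmic argument on $\log|w| $ and $\arg w$ over the disk. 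That diagonal reduction is the step I would be most careful with, and it is where I expect the real work to lie.
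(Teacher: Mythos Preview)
Your overall structure matches the paper's proof exactly: both arguments reduce to the identity $\mu = 1 - \prod_{k=1}^n(1-z_k)$ for the roots $z_k$ of $\chi_1$, establish the easy inclusion $u_n(\Delta)\subseteq S_n$ by taking all roots equal (this is the paper's ``simplest polynomial'' $p_n^{\mu,\zeta}(z)=\mu^{-1}(z^n-(z-\zeta)^n)$), and then isolate the hard inclusion $S_n\subseteq u_n(\Delta)$ as the statement that a product of $n$ elements of the disk $D(1,1)=\{w:|w-1|<1\}$ always lies in $\{w^n:w\in D(1,1)\}$.

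The difference is in how that diagonal reduction is justified. The paper does not argue directly: it quotes the Grace--Walsh--Szeg\H{o} coincidence theorem (called there ``a very special case of Laguerre's theorem'') to conclude $U_n(\Delta^n)=U_n(\operatorname{diag}\Delta^n)$ in one line. Your proposed route is more elementary and does work, but you should make it explicit rather than leave it as ``a standard convexity/logarithmic argument''. Concretely: write each $w_k=r_k e^{i\theta_k}$ with $|\theta_k|<\pi/2$ and $r_k<2\cos\theta_k$ (this is the polar description of $D(1,1)$). Set $\Theta=\sum\theta_k$ and take $w_0=\bigl(\prod r_k\bigr)^{1/n}e^{i\Theta/n}$; then $|\arg w_0|<\pi/2$ automatically, and concavity of $\log\cos$ on $(-\pi/2,\pi/2)$ gives
\[
\frac{1}{n}\sum_k\log\cos\theta_k \le \log\cos\!\frac{\Theta}{n},
\]
whence $|w_0|=\bigl(\prod r_k\bigr)^{1/n}<2\bigl(\prod\cos\theta_k\bigr)^{1/n}\le 2\cos(\Theta/n)$, so $w_0\in D(1,1)$ and $w_0^n=\prod w_k$. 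That is the entire content of the diagonal reduction, and it is genuinely more self-contained than invoking Grace--Walsh. By contrast, your first suggestion---the half-plane description $D(1,1)=\{w:\operatorname{Re}(1/w)>1/2\}$---does not lead anywhere for products, since inversion does not interact well with multiplication; drop that line.
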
  
\begin{proof}
Let $\mu \in S_n$, which means that there is a polynomial $p(z)$ of degree not greater than $n-1$, such that $p(1)=1$ and all roots of 
\begin{equation}
\label{sec1_chi}
\chi(z)=z^n-\mu p(z)
\end{equation}
lie in the unit circle.
Denote these roots by $\zeta_1, \zeta_2, \dots  \zeta_n$. Then
\begin{equation}
\label{sec1_chi_roots}
\chi(z) =(z-\zeta_1)(z-\zeta_2)\dots (z-\zeta_n)=z^n-\sum_{k=1}^n(-1)^{k-1}\sigma_k(\zeta)z^{n-k}
\end{equation}
where
\begin{equation}
\label{sec1_sigma}
\sigma_k(\zeta)=\sigma_k (\zeta_1,\zeta_2, \dots \zeta_n)=\sum_{1 \leq i_1<i_2< \dots i_k \leq n } \zeta_1 \zeta_2 \dots \zeta_k
\end{equation}
are an elementary symmetric polynomials. Define
$$
U_n(\zeta)=\sum_{k=1}^n(-1)^{k-1}\sigma_k(\zeta)
$$
in the poly-disc $\Delta^n=\Delta \times \Delta\times \dots \Delta$.

Since $p(1)=1$, from  (\ref{sec1_chi}) and (\ref{sec1_chi_roots}) for $z=1$ we conclude that
\begin{equation}
\label{sec1_incl1}
S_n\subseteq U_n(\Delta^n)
\end{equation}

On the other hand, let  $\mu \in U_n(\Delta^n)$, which means that there is a $\zeta \in \Delta^n$ such that $\mu=U_n(\zeta)$. We can assume that $\mu \neq 0$.  Define for $k=1, 2\dots,  n$ complex numbers $a_k$ by the rule
\begin{equation}
\label{sec1_a_k}
a_k=\frac{(-1)^k}{-\mu}\sigma_k (\zeta)
\end{equation}
and let 
$$
p(z)=a_n+a_{n-1}z+\dots +a_1z^{n-1}=\sum_{k=1}^{n}\frac{(-1)^k}{-\mu}\sigma_k (\zeta)z^{n-k}
$$ Then $p(1)=1$ and
$$
\chi(z)=z^n-\mu p(z)=z^n+\sum_{k=1}^n(-1)^{k}\sigma_k(\zeta)z^{n-k}=(z-\zeta_1)(z-\zeta_2)\dots (z-\zeta_n)
$$
Hence all roots of $\chi(z)$ lie in $\Delta$. This implies
\begin{equation}
\label{sec_1_incl2}
U_n(\Delta^n)\subseteq S_n
\end{equation}
From (\ref{sec1_incl1}) and (\ref{sec_1_incl2}) we conclude that
\begin{equation}
\label{sec_1_eq}
U_n(\Delta^n)=S_n
\end{equation}
Now we apply the very special case of Laguerre Theorem (or Grace apolarity theorem or Walsh theorem)  (\cite{sheilsmall}, p. 182), which state that 
$$
U_n(\Delta^n)=U_n(\text{diag}\Delta^n)
$$
But $U_n(z,z,\dots,z)=u_n(z)$ and hence $U_n(\text{diag}\Delta^n)=u_n(\Delta)$. 
\end{proof}

Now we can describe the set $S_n$ in a purely geometric terms: $S_n$ is an open set bounded by the curve
\begin{equation}
z(\varphi)=1- 2^n\cos^n(\frac{\varphi}{n})e^{i\varphi}
\end{equation} 
for $-\pi \leq \varphi \leq \pi$ and with punctured point $\lbrace 1 \rbrace$.

Indeed, let $\Delta^1=\lbrace z: \vert z -1 \vert < 1\rbrace$ be the shifted unit disk. Then 
\begin{equation}
\label{sec_1_cor_S_n_view}
u_n(\Delta)=s( r (m_n (\Delta^1)))
\end{equation}
where $s(z)=z+1$ is shift, $r(z)=-z$ is reflection and $m_n(z)=z^n$ .

Hence $S_n$ is an open set, since $u_n$ is an open mapping. 

We claim, that $u_n(\Delta)$ is starlike with respect to $\lbrace 1 \rbrace$, since $\Delta^1$ is starlike with respect to $\lbrace 0 \rbrace$. Hence we shall concentrate only on the  image of the boundary $\partial \Delta^1$.

Denote point on the boundary $\zeta = \rho (\theta) e^{i \theta}$, where $\rho(\theta)=2\cos \theta$ and $-\frac{\pi}{2} \leq \theta < \frac{\pi}{2}$. It follows that $ m_n(\partial \Delta^1)$ is the curve $2^n \cos^n(\theta) e^{in\theta}$ and after substitution $\varphi=n\theta$ the curve $z_0(\varphi)=2^n\cos^n(\varphi/n)e^{i\varphi}$
for $-\frac{n\pi}{2} \leq \varphi < \frac{n\pi}{2}$.

But, because every segment $\langle 0, \rho(\frac{\varphi}{n}) e^{i\frac{\varphi}{n}}\rangle $ maps by the function $m_n$ to the corresponding segment $\langle 0,z_0(\varphi) \rangle $ we claim, that $m_n(\Delta^1)$ bounded only by the part of the curve $z_0(\phi)$ corresponding to $-\pi \leq \phi <\pi$.

Now the statement of the proposition follows from  (\ref{sec_1_cor_S_n_view})

We conclude this section pointed the interesting relations between boundary of $S_n$ and caustics.

Caustic in the optical problems is the place of the concentration of light.

To be more precise, let $S$  be a given curve and let $P$ be a fixed point called the radiant point. If rays
from $P$ are reflected by the curve, the envelope of the reflected rays is called the
\textit{caustic} of $S$ with $P$ as radiant point. Expressing this geometrically, let any line
through $P$  meet the curve at $Q$ and let $ QA$ be drawn so that $QA$ and $QP$ make
equal angles with the tangent to the curve at $Q$; then the envelope of $QA$ is the
\textit{caustic}.

This caustic by reflection is sometimes called the \textit{catacaustic}, to distinguish it
from a curve similarly formed (the \textit{diacaustic)} when the rays are refracted.

You can see the caustic in the cup of coffee in the morning. It looks like cardioid but it is actually the \textit{nephroid}. It is not surprising that a caustic appear in our notes, because usually they are  discriminant sets for different families(or bifurcation sets for dynamical systems) .

Caustics were first introduced and studied by
Tschirnhausen in 1682. Other contributors were Huygens,
Quetelet, Lagrange, and Cayley (\cite{yates}, p. 15).

Let $S$ be a given curve and $O$ be a given point. With center at any
point $A$ of $S$, and radius $AO$, draw a circle: the envelope of such circles is called the\textit{ orthotomic
of $S$ with respect to $O$} (\cite{lock} , p. 153). The caustic is the evolute of the \textit{orthotomic}.

It is easy to see that $S_2$ bounded by \textit{cardioid}, first studied by Roemer (1674) in an investigation for the best form of gear teeth (\cite{yates}, p. 4) . The name cardioid (''heart-shaped'') was first used by de Castillon (1741)  (\cite{lock} , p. 43).

To draw the border of $S_2$, start  from the unit circle with the center at the origin and mark by $O$ the point $1$. With center at any point $B$ on the circle, and radius $BO$, draw another circle. Repeat for a large number of positions of $B$, spread evenly round the base-circle. The heart-shaped curve which all these circles touch is the \textit{cardioid}. The pointed part at $O$ is called a \textit{cusp}.

Thus cardioid is the orthotomic of the unit circle (with respect to the $O=\lbrace 1 \rbrace$).

Cardioid is also the \textit{caustic} of the circle and generated by the point \textit{on the circle} (the source of light)  after reflection by this circle.

Boundary of $S_3$  is \textit{Cayley's sextic}, first studied by Tschirnhausen and in the deep details by Arthur Cayley. This curve is an \textit{orthotomic curve }  (or secondary caustic)  of cardioid .

It is not difficult to see, that each next border of $S_n$ is the orthotomic curve of  the previous one border $S_{n-1}$.

\section{The simplest polynomials}

Let $\mu \in S_n$. Then, according to Theorem \ref{theo_1},  there exist  $\zeta=\zeta(\mu)\in \Delta$ such that $\mu= 1-(1-\zeta)^n$.  Define the (\textit{simplest})  polynomials $p^{\mu, \zeta}_n(z)=\frac{1}{\mu}(z^n-(z-\zeta )^n)$. It is clear, that $\deg(p^{\mu, \zeta}_n) =  n-1$. and  $p^{\mu, \zeta}_n(1)=1$. Hence $p^{\mu, \zeta}_n(z)\in \mathcal{P}_n$ and since the only root of 
$$
\chi(z)=z^n-\mu p^{\mu, \zeta}_n (z)=(z-\zeta)^n
$$
is $\zeta$ we have that $\mu \in S(p^{\mu, \zeta}_n)$.

For every $\mu\in S_n$ denote by $\mathcal{P}_n(\mu)$ the (finite) set of simplest polynomials. Then we have that $\mathcal{P}_n(\mu) \subset \mathcal{P}_n$. 
Since $S_n$ is a union of all $S(p)$ for $p\in \mathcal{P}_n$ we see that
$$
\bigcup_{\mu\in S_n, p\in \mathcal{P}_n(\mu)} S(p)\subseteq S_n
$$

On the other hand previous observations showed that for every $\mu\in S_n$ we can choose $p\in \mathcal{P}_n(\mu)$ such that $\mu\in S(p)$.

This implies that

\begin{equation}
\label{prop_p_union}
S_n=\bigcup_{\mu \in S_n, p\in \mathcal{P}_n(\mu) } S(p)
\end{equation}
We shall to stress out that for the given $\mu \in S_n$ there could be $1,  2, \dots , n-1$ simplest polynomials. This depends of the $\mu$ position in $S_n$. For instance for $n=2$ there is always $1$ simplest polynomial. For $ n=3$ for some $\mu$ it could be $2$ simplest polynomials, but mostly only $1$ and so force.
We shall also to point out that $\deg p^{\mu, \zeta}_n =n-1$, but the number of elements of the  corresponding set $\langle a \rangle$ is $n$.

Previous proposition gives us a practical rule to construct the $a$-stabilization sets for different $\mu$. Let we consider two examples mentioned in Introduction.

\begin{example}
\label{example_z^2}
\end{example} Let $f(z)=z^2$ and we would like to $\langle a \rangle$-stabilize the given system at the fixed point $1$. Then, since $\mu=f'(1)=2$ and $2\notin S_3$ (point $2$ lie exactly at the boundary of $S_3$), we can not choose $3$ numbers $a_1, a_2, a_3$ in such a way, that corresponding polynomial $\chi(z)$ has all roots inside the unit disc $\Delta$.

But, according to theorem \ref{theo_1} we can choose $4$ numbers, since $2 \in S_4$. Now proposition (\ref{prop_p_union})  implies that we can choose polynomial $p^{2, \zeta}_4(z)$ with $2 \in S(p)$.

According to previous general observations we have to choose $\zeta \in \Delta$ in such a way, that $2=1-(1-\zeta)^4$ or $\zeta=1-\omega^k_4\omega^{1/2}_4$ for some $k=1,2,3,4$, where $\omega_4=i$  and $\omega^{1/2}_4=e^{i\frac{\pi}{4}}$.  But it is easy to see, that only $2$ of these $\zeta$ lie in $\Delta$  ( for $k=1$ and $k=4$).  

Hence we can take $\zeta_4= 1-\frac{\sqrt{2}}{2} + i \frac{\sqrt{2}}{2} $ and  a little calculation shows that $p^{2, \zeta_4}_4(z)=a_4+a_3z+a_2z^2+a_1z^3$ with $a_4=i(3-2\sqrt{2}), a_3=2(\sqrt{2}-1)-i 2(3-2\sqrt{2}), a_2=3(\sqrt{2}-1) -i3(\sqrt{2}-1)$   and  $ a_1=2-\sqrt{2} +i \sqrt{2}$. 

The desired stabilization process  started from the given $z_0$ close enough to $1$, $z_1=z_0^2$, $z_2=z_1^2$,  $z_3=z_2^2$ and continue for $m=3,4,\dots$ in the following way:
$$
z_{m+1}=a_1f(z_m)+ a_2f(z_{m-1}) +a_3 f(z_{m-2})+ a_4f(z_{m-3})
$$

\begin{example} 
\end{example}
Let  $f(z)=z^2-2$. Then equation $z^2-2=z$ has two roots, namely $s_1= -1$ and $s_2=2$. Hence this dynamical system has two fixed points: $f(-1)=-1$ and $f(2)=2$. Since both multipliers $\mu=f'(-1)=-2$ and $\nu=f'(2)=4 $  lie outside of $\bar{\Delta}$, both fixed points are  repelling.

Let we shall to stabilize this dynamical system near repelling point $-1$. Since 
$\mu\in S_2$ we can choose $2$ coefficients $a_1$ and $a_2$, i. e. polynomial $p^{-2, \zeta}_2(z) $ of the first degree. 

We have to choose $\zeta \in \Delta$ in such a way, that $-2=1-(1-\zeta)^2$ or $\zeta=1\mp \sqrt{3} $.  But it is easy to see, that only $\zeta=1-\sqrt{3}$ lie in $\Delta$.

Hence $p^{-2, \zeta}_2(z) = - \frac{1}{2}(z^2-(z-\zeta)^2) = (\sqrt{3}-1)z+2-\sqrt{3}$ and stabilization coefficients are $a_1=\sqrt{3}-1$ and $a_2=2-\sqrt{3}$.

Chose the starting point $z_0$ close enough to $-1$. Actually it can be chosen everywhere on the real interval $(-2,2)$ or inside of  the circle $z: |z+1|<1/6$. Define $z_1=z_0^2-2$.

Now for $m=1,2,3, \dots $  define
$$
z_{m+1}=a_1f(z_{m})+a_2f(z_{m-1})=(\sqrt{3}-1)(z_m^2-2)+(2-\sqrt{3})(z_{m-1}^2-2)=
$$
$$
(\sqrt{3}-1)z_m^2+(2-\sqrt{3})z_{m-1}^2-2
$$
and we can see that according to the general theory $z_m\to -1$ .

\section{On the $p / q$ duality}
\label{sec_formalism}

We call by  '$p / q$ \textit{duality}' the elementary  passage from the problem of  roots positions of $\chi_T(z)$ to the $q(\bar{\Delta})$ description for some polynomial $q(z)$. It is based on the simple fact that the problem of stability of the given polynomial $\chi(z)$ of degree $n$  (i. e. all roots of $\chi(z$) lie in $\Delta$ ) is equivalent to the problem of the $\chi^*(\bar{\Delta})$ description, where $\chi^*$ is a $n$-inverse polynomial.

To be more precise we write this observation like a simple lemma.

For the given polynomial 
$$
h(z)=h_0 + h_1 z + \dots + h_n z^n
$$
we define it \textit{(multiplicative) $n$-inverse} polynomial by the rule
$$
h^*(z)=h_n + h_{n-1} z + \dots + h_0 z^n
$$

This definition is a little different with the classical one definition of \textit{conjugate polynomial}, (see e.g. \cite{sheilsmall} , p. 152 ),  but it our problems it works good and is a bit clear.

We shall use this definition even in the case when $h_n=0$. 
For example the $4$-inverse of the polynomial $z$ is $z^3$ and vice versa.

Such defined $n$-inversion is an \textit{involution } on the set of polynomials of the degree at most $n$, i. e. $h^{**}(z)=h(z)$.
 For a point $z \in \mathbb{C} $ denote by $ z^*=1/z$ a \textit{multiplicative inverse} of $z$ and define $ (\infty)^*=0$ and $0^*=\infty$.  
 
Denote $E^*=\lbrace z^* \colon z \in E \rbrace$ , $E ^T=\lbrace z^T: z \in E \rbrace$ and $E ^\frac{1}{T}=\lbrace z: z^T \in E \rbrace$ where $T$ is a given natural number corresponding to the length of the cycle. Operation $^*$ is one to one mapping of $\hat{\mathbb{C}} $ and hence commutes with all ordinary set operations, i. e. $(A\cup B )^*=A^*\cup B^*$, $(A\cap B )^*=A^*\cap B^*$ and $(\hat{\mathbb{C}}\setminus A)^*=\hat{\mathbb{C}}\setminus A^*$

\begin{lemma} 
\label{lemma_1}  Let $\chi (z)= c_0 + c_1z + \dots+ c_n z ^n$  be a polynomial with $c_n \neq 0$. Then, 
\begin{equation}
\label{g1}
\mbox{all zeros of} ~~~~~\chi (z)~~~~~~ \mbox{ lie in }~~~~ \Delta
\end{equation}
if and only if 
\begin{equation}
\label{g2}
0 \notin \chi^*(\bar{\Delta})
\end{equation}
where 
\begin{equation}
\label{chi}
\chi ^* (z)=c_n + c_{n-1}z + \dots +c_0z^n
\end{equation}

\end{lemma}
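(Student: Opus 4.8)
The plan is to exploit the algebraic relation between a polynomial and its $n$-inverse on the unit circle and the sphere, and then reduce the statement about zeros of $\chi$ inside $\Delta$ to a statement about the non-vanishing of $\chi^*$ on $\bar\Delta$. The key observation is that for $z\neq 0$ one has the identity $\chi^*(z)=z^n\,\chi(1/z)$, which holds because
\[
z^n\chi(1/z)=z^n\sum_{k=0}^n c_k z^{-k}=\sum_{k=0}^n c_k z^{n-k}=\sum_{j=0}^n c_{n-j} z^{j}=\chi^*(z).
\]
This single identity carries essentially all the content; the rest is bookkeeping about which points map where.

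First I would factor $\chi(z)=c_n\prod_{j=1}^n(z-\zeta_j)$, using $c_n\neq 0$ so that $\deg\chi=n$ exactly and the $\zeta_j$ are the $n$ zeros (with multiplicity), none of them $0$ only if $c_0\neq 0$ — but I do not need $c_0\neq 0$. Then from the identity above, for $z\neq 0$,
\[
\chi^*(z)=z^n\chi(1/z)=c_n z^n\prod_{j=1}^n\Bigl(\tfrac1z-\zeta_j\Bigr)=c_n\prod_{j=1}^n(1-\zeta_j z).
\]
The right-hand side is a genuine polynomial identity (both sides are polynomials agreeing off $z=0$, hence everywhere), so I get the clean factorization $\chi^*(z)=c_n\prod_{j=1}^n(1-\zeta_j z)$ valid for all $z$. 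From this, the zeros of $\chi^*$ are exactly the points $1/\zeta_j$ for those $j$ with $\zeta_j\neq 0$ (if some $\zeta_j=0$, the corresponding factor $1-\zeta_j z$ is identically $1$ and contributes no zero — consistent with $\chi^*$ then having degree $<n$).

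Now I would prove the two implications. If all zeros of $\chi$ lie in $\Delta$, i.e. $|\zeta_j|<1$ for every $j$, then for any $z$ with $|z|\le 1$ we have $|\zeta_j z|\le|\zeta_j|<1$, so $1-\zeta_j z\neq 0$ for every $j$, hence $\chi^*(z)=c_n\prod_j(1-\zeta_j z)\neq 0$; thus $0\notin\chi^*(\bar\Delta)$. Conversely, suppose some zero satisfies $|\zeta_{j_0}|\ge 1$. If $|\zeta_{j_0}|>1$ then $1/\zeta_{j_0}\in\Delta\subset\bar\Delta$ and $\chi^*(1/\zeta_{j_0})=0$, so $0\in\chi^*(\bar\Delta)$. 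If $|\zeta_{j_0}|=1$ (and necessarily $\zeta_{j_0}\neq 0$), then $1/\zeta_{j_0}$ lies on $\partial\Delta\subset\bar\Delta$ and again $\chi^*(1/\zeta_{j_0})=0$. In either case $0\in\chi^*(\bar\Delta)$, contradicting (\ref{g2}). This establishes the equivalence.

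The only place that needs care — and the one I would flag as the main (mild) obstacle — is the degenerate case where $\chi^*$ has some vanishing leading coefficients, i.e. $c_0=c_1=\dots=0$, so that $\deg\chi^*<n$; here the phrase ``$\chi^*(\bar\Delta)$'' must be read with the convention introduced in the paper that $\chi^*$ is still the $n$-inverse (a polynomial of formal degree $n$), and one must check that the factorization $\chi^*(z)=c_n\prod(1-\zeta_j z)$ still records the right zero set, which it does because a factor $\zeta_j=0$ simply drops out. The closed-disk (rather than open-disk) statement is exactly what makes the boundary case $|\zeta_{j_0}|=1$ come out correctly, so I would take care to use $\bar\Delta$ throughout and note that replacing it by $\Delta$ would make the equivalence false. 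Everything else is the polynomial identity $\chi^*(z)=z^n\chi(1/z)$ and elementary modulus estimates.
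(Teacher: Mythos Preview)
Your proof is correct and follows essentially the same route as the paper: both rely on the identity $\chi^*(z)=z^n\chi(1/z)$, factor $\chi$ over its roots to obtain $\chi^*(z)=c_n\prod_j(1-\zeta_j z)$, and read off the equivalence from the correspondence $\zeta_j\leftrightarrow 1/\zeta_j$. Your treatment is in fact slightly more explicit than the paper's, which handles the possible zero roots by writing $\chi(z)=c_n z^m(z-z_{m+1})\cdots(z-z_n)$ but otherwise proceeds identically.
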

\begin{proof}
Let $\chi(z)=c_nz^m(z-z_{m+1} )\dots (z-z_n)$, where $0 \leq m<n$ (the case $m=n$ is trivial). Then \begin{equation}
\label{g3}
\chi ^* (z) = z^n \chi (1/z)=c_n(1-z_{m+1}z)\dots (1-z_nz)
\end{equation}.

Thus, from (\ref{g1}) we conclude that all zeros of $\chi^* (z) $ lie outside of $\bar{\Delta}$, which implies (\ref{g2}).
On the other hand, if (\ref{g2}) holds, then (\ref{g3})    implies (\ref{g1}).
\end{proof}

\begin{remark} We can not remove the condition $c_n \neq 0$. Indeed, let say $n=4$ and $\chi (z)=z$. Then all zeros of $\chi (z)$ lie in $\Delta$, but  for $\chi^*(z)=z^3$ we have $0 \in \chi^*( \bar{\Delta})$.
\end{remark}

Next theorem express the stability domain $S^T(p)$ of the given polynomial $p(z)$ it terms of the set $q(\bar{\Delta})$ for the corresponding $q$-polynomial. This we call the $p/q$-duality.

\begin{theorem}
\label{theo2}
 Let $\mathcal{O}$  be a unstable cycle of the length $T$ of the dynamical system $f : \hat{\mathbb{C} }\to \hat{\mathbb{C} }$ with multiplier of the cycle $\mu(\mathcal{O})=\mu$.
 
 Then the set of complex numbers $\langle a \rangle=\lbrace a_1, a_2. \dots , a_n \rbrace$ stabilize locally this cycle  if and only if polynomial
 $$
 q_{1/T}(z)=a_1z^{T+1}+a_2z^{2T+1}+ \dots + a_nz^{nT+1}
 $$
 omit values $\mu^{-\frac{1}{T}}$ in the closed unit disc $\bar{\Delta}$.
 \end{theorem}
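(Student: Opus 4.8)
The plan is to reduce the statement about roots of $\chi_T(z)$ to Lemma~\ref{lemma_1} and then unwind the definitions of the $n$-inverse polynomial and of the operations $^*$, $^T$, $^{1/T}$. Recall that, by definition, $\langle a\rangle$ stabilizes the cycle $\mathcal{O}$ precisely when all zeros of
$$
\chi_T(z)=z^{(n-1)T+1}-\mu\,p^T(z),\qquad p(z)=a_n+a_{n-1}z+\dots+a_1z^{n-1},
$$
lie in $\Delta$. The first step is to identify the degree of $\chi_T$ and check the leading coefficient is nonzero so that Lemma~\ref{lemma_1} applies: since $a_1\neq0$, the polynomial $p$ has exact degree $n-1$, hence $p^T$ has degree $(n-1)T$, and $z^{(n-1)T+1}$ contributes the top term, so $\chi_T$ has exact degree $N:=(n-1)T+1$ with leading coefficient $1\neq0$.

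The second step is to compute the $N$-inverse $\chi_T^*$. Using $h^*(z)=z^{N}h(1/z)$ for a polynomial of degree $\le N$, and the fact that $p^*(z)=z^{n-1}p(1/z)=a_1+a_2z+\dots+a_nz^{n-1}$, one gets
$$
\chi_T^*(z)=z^{N}\chi_T(1/z)=z^{N}\bigl(z^{-N}-\mu\,z^{-(n-1)T}p(1/z)^T\bigr)=1-\mu\,z\,\bigl(z^{n-1}p(1/z)\bigr)^T=1-\mu\,z\,(p^*(z))^T.
$$
By Lemma~\ref{lemma_1}, all zeros of $\chi_T$ lie in $\Delta$ if and only if $0\notin\chi_T^*(\bar\Delta)$, i.e. if and only if the polynomial $z\mapsto z\,(p^*(z))^T$ omits the value $1/\mu=\mu^{-1}$ on $\bar\Delta$. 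Writing $q(z):=z(p^*(z))^T=a_1z+a_2z^2+\dots+a_nz^n$ recovers exactly the polynomial from the practical rule quoted in the introduction; this handles the case $T=1$ directly.

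The third step passes from $q$ to $q_{1/T}$, and this is the only place where a small argument is needed (the expected main obstacle, though it is mild): one must check that $q_{1/T}(z)=q(z^T)$, since $q_{1/T}(z)=a_1z^{T+1}+a_2z^{2T+1}+\dots+a_nz^{nT+1}=z\,(p^*(z^T))^T=q(z^T)$. Because $z\mapsto z^T$ maps $\bar\Delta$ onto $\bar\Delta$ (indeed $\bar\Delta^T=\bar\Delta$ and $\bar\Delta^{1/T}=\bar\Delta$), we have $q_{1/T}(\bar\Delta)=q(\bar\Delta^T)=q(\bar\Delta)$, so $q_{1/T}$ omits a value on $\bar\Delta$ exactly when $q$ does. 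Combining with the previous step: $\langle a\rangle$ stabilizes $\mathcal{O}$ iff $1/\mu\notin q(\bar\Delta)=q_{1/T}(\bar\Delta)$, i.e. iff $q_{1/T}$ omits $\mu^{-1}$ on $\bar\Delta$. The final cosmetic step is to rewrite $\mu^{-1}$ as $\mu^{-1/T}$; here one should note that since $q_{1/T}(z)=q(z^T)$ is invariant under $z\mapsto \omega z$ for any $T$-th root of unity $\omega$, the value set $q_{1/T}(\bar\Delta)$ is insensitive to the choice of branch, and in fact omitting $\mu^{-1/T}$ for one (equivalently every) $T$-th root of $\mu^{-1}$ is equivalent to omitting $\mu^{-1}$ itself — because $w\in q(\bar\Delta)$ iff some (equivalently every) $T$-th root of $w$ lies in $q_{1/T}(\bar\Delta)$. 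This closes the equivalence. The whole argument is essentially bookkeeping once Lemma~\ref{lemma_1} is in hand; the only genuine point requiring care is the interplay between the multivaluedness of $\mu^{-1/T}$ and the symmetry $q_{1/T}(z)=q(z^T)$, which I would state explicitly as a preliminary remark before invoking it.
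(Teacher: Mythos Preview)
Your approach through Lemma~\ref{lemma_1} and the computation of $\chi_T^*(z)=1-\mu\,z(p^*(z))^T$ is exactly the paper's, and the reduction to ``$q(z)=z(p^*(z))^T$ omits $\mu^{-1}$ on $\bar\Delta$'' is correct. The problem is your third step. The identity $q_{1/T}(z)=q(z^T)$ is \emph{false} for $T\ge 2$: with $q(z)=z(p^*(z))^T$ one has $q(z^T)=z^T(p^*(z^T))^T$, whereas $q_{1/T}(z)=z\,p^*(z^T)$; these differ already in degree. (Your displayed chain $a_1z^{T+1}+\dots+a_nz^{nT+1}=z(p^*(z^T))^T=q(z^T)$ is wrong at both equalities: $z(p^*(z^T))^T$ has degree $1+(n-1)T^2$.) The correct relation, which the paper uses, is the $T$-th root transform identity
\[
\bigl(q_{1/T}(z)\bigr)^T \;=\; q(z^T),\qquad\text{hence}\qquad \bigl(q_{1/T}(\bar\Delta)\bigr)^T=q(\bar\Delta),
\]
since $z\mapsto z^T$ maps $\bar\Delta$ onto $\bar\Delta$. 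From this one gets $\mu^{-1}\notin q(\bar\Delta)$ if and only if no $w\in q_{1/T}(\bar\Delta)$ satisfies $w^T=\mu^{-1}$, i.e.\ $q_{1/T}$ omits every (equivalently, by the $T$-fold rotational symmetry of $q_{1/T}(\bar\Delta)$, any one) value of $\mu^{-1/T}$.

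So the passage from $\mu^{-1}$ to $\mu^{-1/T}$ is not cosmetic: it is forced by the exponent $T$ in the root-transform identity, not by a relabeling. Your final sentence (``$w\in q(\bar\Delta)$ iff some $T$-th root of $w$ lies in $q_{1/T}(\bar\Delta)$'') is actually the right statement, but it contradicts your earlier claim $q_{1/T}(\bar\Delta)=q(\bar\Delta)$; replace that wrong equality by $(q_{1/T}(\bar\Delta))^T=q(\bar\Delta)$ and the proof goes through.
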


\begin{proof} Let
$$
p(z)=a_n + a_{n-1}z + \dots + a_1 z^{n-1}
$$
and denote $ q(z)=z(p^*(z))^T $ where $p^*(z)$ is the $(n-1)$-inverse of $p(z)$. We also denote $q_{1/T}(z)=zp^*(z^T)$  the  $T$-root transform of the polynomial $q(z)$.

We can express the statement of the theorem as
\begin{equation}
\label{sch2}
S^T(p)=(\hat{\mathbb{C} }\setminus q(\bar{\Delta}))^*
\end{equation}
or, in different terms,
\begin{equation}
\label{sch3}
S^T(p)=(\hat{\mathbb{C} }\setminus (q_{\frac{1}{T}}(\bar{\Delta}))^T)^*
\end{equation}
 
Let 
\begin{equation}
\label{hi}
\chi (z)=z^{(n-1)T+1} - \mu (p(z))^T
\end{equation}
We can assume that $\mu \neq 0$.

It is clear (use  (\ref{g3})) that $((n-1)T+1)$ --inverse of $\chi(z)$  is 
\begin{equation}
\label{d2}
\chi^*(z)=1- z\mu (p^*(z))^T
\end{equation}
where $p^*(z)$ is the $(n-1)$-inverse polynomial of $p$. Hence $\chi^*(z)=0$ if and only if $q(z)=\mu^*$ and

\begin{equation}
\label{d3}
0\notin \chi^*(\bar{\Delta} ) \Leftrightarrow \mu ^* \notin q(\bar{\Delta})
\end{equation}
Now  (\ref{sch2}) follows from the definition of $S^T(p)$ and Lemma  \ref{lemma_1}.

In order to prove (\ref{sch3}) we have only to claim that
$$
q(\bar{\Delta})=(q_\frac{1}{T}(\bar{\Delta}))^T
$$
\end{proof}

It is much easer to work with the polynomial $q_\frac{1}{T}(z)$ instead of $q(z)$, since it  spectrum lie in the arithmetic progression $T\mathbb{Z} +1$ and the degree is the same as degree of $q(z)$. 

Functions $g(z)$ regular  in $\Delta$ with the  spectrum in the arithmetic progression $T\mathbb{Z} +1$, i.e such that
\begin{equation}
\label{sym1}
g(z)= \hat{g}(1) z + \hat{g}(T+1)z^{T+1} + \hat{g}(2T+1) z^{2T+1} + \hat{g}(3T+1) z^{3T+1} +   \dots 
\end{equation}
are called \textit{ $T$-symmetric} ( see \cite{hayman}, p. 95), since the image of the unit disc $g(\Delta)$ has  $T$-fold rotational symmetry, that is invariant under the rotation $w\rightarrow \omega_T  w$, where $\omega_T=e^\frac{2\pi i}{T}$ is the $T$-root of unity (see Appendix  for the proof).

\begin{remark} We shall to point out that the stability set $S^T(p)$ in general has not $T$-fold rotational symmetry.

For example, for $p(z)=1+z$ and $T=2$ the corresponding stability set $S^2(p)$  is $(\hat{\mathbb{C} }\setminus q(\bar{\Delta}))^*$ , where $q(z)=z(1+z)^2$ and this set has not rotational symmetry as well as the set $q(\bar{\Delta})$.

Meanwhile for $q_\frac{1}{2}(z)=z(1+z^2)$ corresponding set $q_\frac{1}{2}(\bar{\Delta})$ has $2$-fold rotational symmetry, but after we "squaring" the set $q_{\frac{1}{2}}(\bar{\Delta}) $ this symmetry disappear.
\end{remark}

The next corollary is a simple application of the previous theorem.

\begin{corollary}

Let  $\mu \geq 1$  be a real number. Then we could not find a real set $\langle a \rangle $ which stabilize a cycle of length $T$ with the multiplier equal to $\mu$.

\end{corollary}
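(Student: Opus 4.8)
The plan is to use the $p/q$-duality from Theorem~\ref{theo2}. A real set $\langle a\rangle$ stabilizes a $T$-cycle with multiplier $\mu$ if and only if the polynomial $q_{1/T}(z)=a_1z^{T+1}+a_2z^{2T+1}+\dots+a_nz^{nT+1}$ omits the value $\mu^{-1/T}$ in $\bar\Delta$; equivalently, using the non-root-transformed version, $q(z)=z(p^*(z))^T$ omits $\mu^{-1}$ in $\bar\Delta$, where $p^*$ has real coefficients whenever $\langle a\rangle$ is real. Since $\mu\ge 1$ is real, $\mu^{-1}$ is a real number in $(0,1]$. The normalization $p(1)=1$ gives $p^*(1)=1$, hence $q(1)=1$. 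So the question reduces to: can a polynomial $q$ with real coefficients and $q(1)=1$ avoid the real point $\mu^{-1}\in(0,1]$ on all of $\bar\Delta$?

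The key step is that $q$ has real coefficients, so $q(\bar\Delta)$ is symmetric about the real axis: $q(\bar z)=\overline{q(z)}$. In particular $q$ maps the real interval $[-1,1]\subset\bar\Delta$ into $\mathbb{R}$. Now $q(1)=1$ and $q(0)=0$, so by the intermediate value theorem $q([0,1])\supseteq[0,1]\ni\mu^{-1}$ when $\mu^{-1}<1$. Thus $q$ takes the value $\mu^{-1}$ somewhere on $[0,1]\subset\bar\Delta$, contradicting the omitting requirement. This handles all $\mu>1$.

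The remaining case is $\mu=1$ exactly, where $\mu^{-1}=1=q(1)$; here $q$ obviously does not omit $1$ on $\bar\Delta$ since it attains it at $z=1$. (This case is really excluded on dynamical grounds anyway, since $\mu=1$ is not a repelling multiplier, but the argument covers it trivially.) So in every case $\mu^{-1}\in q(\bar\Delta)$, and by Theorem~\ref{theo2} no real $\langle a\rangle$ can stabilize the cycle.

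I do not anticipate a serious obstacle: the only point requiring a little care is to confirm that $p^*$ (and hence $q$) genuinely has real coefficients when $\langle a\rangle$ is real — this is immediate from the definitions of $p(z)=a_n+a_{n-1}z+\dots+a_1z^{n-1}$ and of the $(n-1)$-inverse — and to note $q(0)=0$, $q(1)=1$ so that the intermediate value theorem applies on $[0,1]$. One should also remark that $[0,1]\subset[-1,1]\subset\bar\Delta$, so the value is attained at a legitimate point of the closed disk. Everything else is routine.
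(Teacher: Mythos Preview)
Your proposal is correct and follows essentially the same route as the paper: invoke the $p/q$-duality of Theorem~\ref{theo2}, observe that the relevant polynomial has real coefficients with value $0$ at $0$ and $1$ at $1$, and apply the intermediate value theorem on $[0,1]\subset\bar\Delta$ to hit $\mu^{-1}$ (resp.\ $\mu^{-1/T}$). The only cosmetic difference is that the paper runs the argument with $q_{1/T}$ and the omitted value $\mu^{-1/T}$, whereas you work with $q(z)=z(p^*(z))^T$ and the omitted value $\mu^{-1}$; both versions are equivalent here.
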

\begin{proof}
Let 
$$
q_{1/T} (z)=zp^*(z^T)=a_1z^{T+1}+a_2z^{2T+1}+ \dots + a_nz^{nT+1}
$$ 
and all $a_k$ are real. Then $q_{1/T}(x)$ is real for real $x$ and, since $q(0)=0$ and $q(1)=1$ we have that all real segment $[0,1]$ covered by the values of $q_{1/T} (z)$ in $\bar{\Delta}$ and hence if $\mu \geq 1$ and real  $q_{1/T} (z)$ can not omit value $\mu^{-1/T}$ in $\bar{\Delta}$. Now Theorem \ref{theo2} implies that the set $\langle a \rangle$ could not stabilize the cycle with multiplier $\mu$.
\end{proof}

\begin{remark} We will show later that for any $\mu$, which is not real and bigger than $1$ we can find real stabilisation set $\langle a \rangle$ which successfully $\langle a, T \rangle$ stabilize cycles with multiplier $\mu$.

\end{remark}

\section{Stabilisation for the given set of multipliers}

In this section we describe the strategy to find  \textit{one} $a$-stabilization process for the given  \textit{set of multipliers} $M$ \textit{for} $T$-\textit{cycles} which stabilize locally \textit{every} $T$-cycle with multiplier in $M$.

Fix natural $T$ (length of the cycle) and let $M$ be a given  bounded subset of the complex plane $\mathbb{C}$ which we consider as the domain of multipliers of the $T$-cycles. Denote by $\mu_M= \sup\lbrace |z| : z\in M \rbrace$ the size of the set $M$.

Let $\mathcal{O}_c=(s_1(c),s_2(c), \dots s_T(c))$ be an unstable cycle  of  the map $f_c$ from the given family $\lbrace f_c \rbrace_{c\in \mathcal{X}}$ and $\mu(\mathcal{O}_c )\in M$. We shall to find one polynomial $p\in \mathcal{P}_n$ of the \textit{smallest degree}, such that $M\subseteq S^T(p)$. 

The latter inclusion means that  $\langle a,T \rangle$ -averaging process, defined by the polynomial $p$ stabilize (locally) every unstable $T$-cycle of any map $f_c$ from the given family.

Let  $\Omega_M=\hat{\mathbb{C}}\setminus M^*$ and $(\Omega_M)^\frac{1}{T}=\lbrace z: z^T \in \Omega_M \rbrace$. 
Theorem  \ref{theo2} implies that for every polynomial $p\in \mathcal{P}_n$ and any natural $T$
\begin{equation}
\label{main}
M\subseteq S^T(p)
\end{equation}
if and only if~~~~ 
\begin{equation}
\label{main_q}
q_\frac{1}{T}(\bar{\Delta})\subseteq (\Omega_M)^\frac{1}{T}
\end{equation}
where $q_\frac{1}{T}(z)=zp^*(z^T)$.

Since $q_\frac{1}{T}(0)=0$ and $q_\frac{1}{T}(1)=1$ to satisfy (\ref{main_q}) the set $(\Omega_M)^\frac{1}{T}$  have to contain points $0$ and $T$ roots of unity $\omega_T^k=e^{2 \pi i \frac{k}{T}}$ for  $k=0, 1, 2, \dots T-1$. Actually $(\Omega_M)^\frac{1}{T}$ is a $T$-symmetrical set.

 Denote by $\lambda_\Omega=\inf \lbrace |z| : z \in \Omega \rbrace$ the distance from $\mathbb{C}\setminus \Omega$ to the origin.

It is clear that for the given set $M$ and corresponding set $\Omega=(\Omega_M)^{1/T}$ we have
\begin{equation}
\label{dict1}
\mu_M=\lambda^{-T}_\Omega
\end{equation}

Denote $\mathcal{Q}^T_n=\lbrace q(z):  q(z)=zp^*(z^T),\text{where}~~~~ p\in \mathcal{P}_n\rbrace$. Thus polynomials $q\in \mathcal{Q}^T_n$ are of the form
\begin{equation}
\label{form}
q(z)=q_1z+ q_2z^{T+1}+ \dots +q_{n}z^{nT-T+1}
\end{equation}
and
\begin{equation}
\label{1}
q(1)=1
\end{equation}
We shall write $\mathcal{Q}_n$ instead of $\mathcal{Q}^1_n$.

Previous observations lead to the following problem from the Geometric Function Theory.

\textbf{Problem}. For the given $T$-symmetrical set $\Omega$, with $0\in \Omega$ and  $1\in \Omega$ to find a polynomial $q\in \mathcal{Q}^T_n$ (of the possible smallest degree), such that
\begin{equation}
\label{problem}
q(\bar{\Delta}) \subseteq \Omega
\end{equation}

\section{Necessary conditions}

First we obtain a statements of the following type. Let $\Omega$ be a given $T$-symmetrical domain. Then
\begin{equation}
\label{nes_cond_lambda}
\text{for }~~~ q\in \mathcal{Q}_n^T ~~~\text{inclusion} ~~~(\ref{problem}) ~~~\text{implies that} ~~~ n\geq \phi (\lambda_{\Omega}^{-T})
\end{equation}
for some $\phi(x) \to \infty~~~ (x \to \infty)$,    which we call \textit{necessary conditions}.

Let now $M$ be a given set of multipliers and $T$ be a length of the cycle. Then, in view of (\ref{main}), (\ref{main_q}) and (\ref{dict1}) we can rewrite (\ref{nes_cond_lambda}) in the following way
\begin{equation}
\label{nes_cond_mu}
\text{for }~~~ p\in \mathcal{P}_n ~~~\text{inclusion} ~~~M\subseteq S^T(p) ~~~\text{implies that} ~~~ n\geq \phi (\mu_M)
\end{equation}

Since polynomial map $q : \mathbb{C} \to \mathbb{C} $ is an open mapping, i .e. maps open subsets to the open one, we have that (\ref{problem}) implies
\begin{equation}
\label{open}
q(\Delta) \subseteq \Omega^o
\end{equation}
where $\Omega^o$ is the set of interior points of $\Omega$.
Hence to obtain the \textit{necessary conditions } we can always assume (\ref{open}).

First approach to the necessary conditions based on the coefficient estimates of the corresponding Riemann function. Different approach, which is based on the estimate of maximum modulus of Riemann function, is a subject of the next subsection.
 
  Range restrictions on $q(z)$ implies estimates of (Taylor) coefficients of $q$ . This is the \textit{general principle }of Geometric Function Theory.

The best known problem of this kind is the famous \textit{Bieberbach conjecture} posed in 1916 by Ludwig Bieberbach \cite{bieber_c}---- if $g$ is an one-to-one analytic function, normalized by the condition $g'(0)=1$,  and $q(0)=0$, which  maps $\Delta$ onto the simple connected domain $\Omega\neq \mathbb{C}$  then $\vert \hat{g}(k) \vert \leq k$. This conjecture was proved only  in 1985 by Louis de Branges \cite{branges} and almost for the century was the main problem of Geometric Function Theory.

In this notes we do not need such sharp and deep results and will use two different but simple ways to obtain estimates of coefficients.

First way is rude and applied when $\Omega$ is a general (not simple-connected) domain. It is based on the  \textit{Vieta's formulas}.

Second way, when $\Omega$ is simple connected, is application of subordination principle for the Riemann mapping $g :   \Delta \to \Omega$. There is at least three elementary cases when we can obtain such estimates.

First one is general \textit{Lindelof  principle} for the first coefficient, second one is a little modification of \textit{Caratheodory lemma} and can be applied when $\Omega_M$ is a half-plane and the third one is  \textit{Rogosinskii inequality}. The last one we can apply for any simply connected domain, but this inequality gives only average domination of coefficients.

Finally (after we get desired estimates of Taylor coefficients) a normalized condition for $q\in \mathcal{Q}^T_n$  implies   estimate from below on the number $n$ 
.
\subsection{$M$ is a  point}
First lemma in this direction in the simplest form state that some  \textit{restrictions on the range} $q(\Delta)$ implies \textit{estimates of coefficients}. Namely, if a polynomial $q(z)$ satisfied $q(0)=0$ and omit some value in $\Delta$ then it coefficients  bounded by the the binomial coefficients times this value.

\begin{lemma}
\label{lemma_fc} Let $q(z)=\hat{q}(1)z +\hat{q}(2)z^2 +  \dots + \hat{q}(n) z^n$ and 
\begin{equation}
\label{l2}
w \notin q(\Delta)
\end{equation}
Then
\begin{equation}
\label{l3}
\vert \hat{q} (k) \vert \leq \binom {n} k \vert w \vert
\end{equation}

\end{lemma}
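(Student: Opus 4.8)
The plan is to realize the omitted value $w$ as a root of the polynomial $w - q(z)$ and then read off the coefficient bounds from Vieta's formulas, exactly the "rude" approach advertised in the text. Since $q(0) = 0$, the polynomial $P(z) = w - q(z)$ has constant term $w \neq 0$ (if $w = 0$ the inequality \eqref{l3} is trivial, so assume $w \neq 0$), and leading coefficient $-\hat{q}(n)$, so $P$ has degree $\le n$. Write $P(z) = -\hat{q}(n)\,(z - z_1)\cdots(z - z_n)$ if $\deg P = n$; in general, if $\deg P = m \le n$, factor $P(z) = c\,(z-z_1)\cdots(z-z_m)$ with $c = -\hat q(m)$ (and relabel $\hat q(m)$ as the true leading coefficient). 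The hypothesis \eqref{l2} says $w \notin q(\Delta)$, i.e. $P(z) \neq 0$ for $z \in \Delta$, so every root $z_j$ satisfies $|z_j| \geq 1$.

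Next I would extract the coefficient of $z^k$ in $P(z) = c\prod_{j}(z - z_j)$ via the elementary symmetric functions of the roots, just as in the proof of Theorem \ref{theo_1}: the coefficient of $z^{m-k}$ is $(-1)^k c\,\sigma_k(z_1,\dots,z_m)$. Comparing with $P(z) = w - q(z)$, for $1 \le k \le m$ this gives $-\hat q(m-k) = (-1)^k c\,\sigma_k(z)$ when $m-k \ge 1$ (and $w = (-1)^m c\,\sigma_m(z) = (-1)^m c \prod z_j$ for the constant term). The key estimate is then the triangle inequality bound
$$
|\sigma_k(z_1,\dots,z_m)| \le \binom{m}{k}\,\frac{\prod_{j=1}^m |z_j|}{\min_{1\le j \le m}\ \text{(products of $k$-subsets)}}\,,
$$
but it is cleaner to argue directly: since each $|z_j| \ge 1$, each product $|z_{i_1}\cdots z_{i_k}|$ in $\sigma_k$ is bounded above by $|z_1 \cdots z_m|$, hence $|\sigma_k(z)| \le \binom{m}{k}|z_1\cdots z_m|$. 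Combining with $|w| = |c|\,|z_1\cdots z_m|$ yields $|\hat q(m-k)| = |c|\,|\sigma_k(z)| \le \binom{m}{k}|c|\,|z_1\cdots z_m| = \binom{m}{k}|w|$. Reindexing $j = m-k$ gives $|\hat q(j)| \le \binom{m}{j}|w| \le \binom{n}{j}|w|$ for $0 \le j \le m-1$, and $\hat q(j) = 0$ for $j > m$ (so $|\hat q(j)| \le \binom n j |w|$ trivially there too), which is \eqref{l3}.

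The only real subtlety — the step I expect to need the most care — is the bookkeeping when $\deg P < n$, i.e. when $\hat q(n) = \hat q(n-1) = \dots = \hat q(m+1) = 0$: one must be sure that the estimate is being applied with the correct degree $m$ and that the binomial coefficient is then $\binom m j \le \binom n j$, which holds since $\binom{\cdot}{j}$ is increasing in the upper argument. One should also handle the degenerate case where $q \equiv 0$ on the relevant range or where some $z_j$ lie on $\partial\Delta$: the open-mapping remark \eqref{open} is not needed here since we only used that $P$ is zero-free on the \emph{open} disk, so roots on $|z|=1$ are harmless. Everything else is the routine symmetric-function computation, so I would not belabor it.
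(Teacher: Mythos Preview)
Your argument is correct and is essentially the paper's own proof: both apply Vieta's formulas to the polynomial $q(z)-w$ and bound the elementary symmetric functions of its roots. The one difference is that the paper first passes to the $n$-inverse polynomial $\chi^*(z)=z^n(q(1/z)-w)$, whose leading coefficient is $-w\neq 0$; this forces $\deg\chi^*=n$ exactly, with all $n$ roots $\zeta_j=1/z_j$ lying in $\bar\Delta$, so the estimate $|\sigma_k(\zeta)|\le\binom{n}{k}$ is immediate and no separate bookkeeping for the case $\deg(w-q)=m<n$ (nor the separate check $|\hat q(m)|\le|w|$, which you omitted from your stated range $0\le j\le m-1$ but which follows from $k=0$) is needed. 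Your direct version, bounding each $k$-fold subproduct of the $z_j$ by the full product $\prod|z_j|$ since all $|z_j|\ge 1$, is the same estimate viewed from the other side of the inversion and works just as well.
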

\begin{proof}
Let $\chi (z)=q(z)-w$ and $\chi^*(z)=\hat{q}(n) + \hat{q} (n-1)  z + \dots  + \hat{q} (1) z^{n-1} - w z ^n$  is it $n$-inverse. Then, $ 0 \notin \chi(\Delta)$ and, according to lemma \ref{lemma_1}, all roots of $\chi^*(z)=-w(z-\zeta_1) \dots (z-\zeta_n)$ lies in the $\bar{\Delta}$. Hence, by  Vieta's formulas,

$$
\vert \hat{q}(k) \vert =\vert    \sum_{1 \leq i_1<i_2< \dots i_k \leq n } (-1)^{k+1} w  \zeta_{i_1 }\zeta_{i_2} \dots \zeta_{i_k}\vert \leq \vert w\vert \sum_{1 \leq i_1<i_2< \dots i_k \leq n } 1 = \binom {n} k \vert w \vert
$$

\end{proof}
The estimates (\ref{l3}) are  best possible, since for the polynomial $q(z)=w-w(1-z)^n$ point $w \notin q(\Delta)$, but $\vert \hat{q}(k) \vert
=\binom {n} k \vert w \vert$.

Let $M=\mu$,  $\Omega=\hat{\mathbb{C}}\setminus \mu^{-1}$ and $q\in \mathcal{Q}_n$ is such that $q(\Delta)\subset \Omega$.   By the previous lemma 
$$
1=\sum_{k=1}^n \hat{q}(k)\leq \sum_{k=1}^n \vert\hat{q}(k)\vert\leq \sum_{k=1}^n\binom {n} k \vert \mu^{-1}\vert\leq \vert \mu \vert^{-1}(2^n-1)
$$
and hence
\begin{equation}
\label{coeff_ineq}
n\geq \log_2  (\vert \mu \vert +1)
\end{equation}

This is a first example of \textit{necessary conditions} which we write us
\begin{theorem} Let $p\in \mathcal{P}_n$ and $\mu \in S(p)$. Then $n\geq \log_2  (\vert \mu \vert +1)$.

\end{theorem}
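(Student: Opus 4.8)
The plan is to combine the $p/q$-duality (Theorem~\ref{theo2}, in the case $T=1$) with the coefficient bound of Lemma~\ref{lemma_fc}; in fact the desired inequality is precisely the estimate~(\ref{coeff_ineq}) derived just above, so the task is only to assemble the pieces cleanly.

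First I would unwind the hypothesis. Since $p\in\mathcal{P}_n$ we have $\deg p\le n-1$, $p(1)=1$, and the coefficient of $z^{n-1}$ in $p$ is nonzero by the standing convention; hence $\chi(z)=z^n-\mu p(z)$ has degree exactly $n$ whenever $\mu\ne0$ (the case $\mu=0$ is vacuous, as then $\log_2(|\mu|+1)=0\le n$). By definition $\mu\in S(p)=S^1(p)$ means all zeros of $\chi$ lie in $\Delta$, so Theorem~\ref{theo2} with $T=1$ applies: it is equivalent to say that the polynomial $q(z)=zp^*(z)=\hat q(1)z+\hat q(2)z^2+\dots+\hat q(n)z^n\in\mathcal{Q}_n$ omits the value $\mu^{-1}$ on $\bar{\Delta}$, and in particular $\mu^{-1}\notin q(\Delta)$. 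Here $\hat q(0)=0$ and the normalization reads $q(1)=\sum_{k=1}^n\hat q(k)=1$.

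Then I would apply Lemma~\ref{lemma_fc} with $w=\mu^{-1}$: from $\mu^{-1}\notin q(\Delta)$ we get $|\hat q(k)|\le\binom nk|\mu|^{-1}$ for $k=1,\dots,n$. Summing against the normalization,
\[
1=\Bigl|\sum_{k=1}^n\hat q(k)\Bigr|\le\sum_{k=1}^n|\hat q(k)|\le|\mu|^{-1}\sum_{k=1}^n\binom nk=|\mu|^{-1}(2^n-1),
\]
so that $|\mu|+1\le 2^n$, i.e. $n\ge\log_2(|\mu|+1)$, which is exactly the claim.

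I expect no genuine obstacle here, since every ingredient is already in hand; the only points requiring a moment of care are bookkeeping ones — that for $T=1$ the $T$-root transform is the identity, so $q_{1/1}=q$ and $\mathcal{Q}_n^1=\mathcal{Q}_n$ coincide; that $\mu=0$ is a trivial special case; and that ``$q$ omits $\mu^{-1}$ in $\bar{\Delta}$'' a fortiori gives the open-disk statement $\mu^{-1}\notin q(\Delta)$ needed by Lemma~\ref{lemma_fc}. One could equivalently bypass $q$ and argue directly on the $n$-inverse $\chi^*(z)=1-\mu z p^*(z)$ via Lemma~\ref{lemma_1}, but routing through Lemma~\ref{lemma_fc} is shortest, as it already packages the Vieta step used there.
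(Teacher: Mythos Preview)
Your proposal is correct and follows exactly the paper's own argument: pass via the $p/q$-duality (Theorem~\ref{theo2} with $T=1$) to $q\in\mathcal{Q}_n$ omitting $\mu^{-1}$, apply Lemma~\ref{lemma_fc}, and sum the coefficient bounds against $q(1)=1$ to get $|\mu|\le 2^n-1$. Your treatment is in fact a bit more careful than the paper's, since you explicitly dispose of the $\mu=0$ case and note the passage from $\bar\Delta$ to $\Delta$.
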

 This fact, of course, can be deduced directly from Theorem \ref{theo_1}.

We conclude this subsection by the interesting observation concerning analogue of Koebe 1/4 Theorem for polynomials.

Because  $q : \Delta \to \mathbb{C} $ is an open mapping, the image $q(\Delta_r (z_0))$ always contains some disk $\Delta_\epsilon  (q(z_0))$. 
 But it seems interesting that lemma \ref{lemma_fc} implies this fact directly and from this we can conclude that $q(z)$ is an open mapping. It also gives the precise value of $\epsilon$, depending of $r$ and coefficients of the given polynomial. Namely, normalize a polynomial $q(z)$ by the condition $q(0)=0$ and define the number
$$
n(q)= \max_{1\leq k \leq n} \frac{ \vert \hat{q} (k)\vert }{\binom{n} k}
$$ 
Then lemma \ref{lemma_fc} implies that
\begin{equation}
\label{radsh1}
\Delta_{n(q)} \subseteq q(\Delta)
\end{equation}

Denote by $ \mathcal{U}_n=\lbrace q(z): q(z)=z+ \cdots + q_n z^n\rbrace $ the set of polynomials normalized by the conditions $q(0)=0$ and $q'(0)=1$. It is clear that  $n(q) \geq 1/n$ for $q \in \mathcal{U}_n $  and (\ref{radsh1}) implies
 \begin{equation}
\label{radsh3}
\Delta_{1/n} \subseteq q(\Delta)
\end{equation}

In \cite{dub} and \cite{dub2} was posed a problem on the maximal radius of the disk with the center at the origin, such that $q(\Delta)$ contained this disk for every $q\in \mathcal{U}_n$. Previous inclusion (\ref{radsh3})  gives the answer to this problem and example 
$$
q(z)=\frac{1}{n}(1-(1-z)^n)
$$
shows that the constant $1/n$ is the best possible.

Since  $n(q) \geq 1/(2^n-1)$ for $q \in \mathcal{Q}_n $ we have 
\begin{equation}
\label{radsh2}
\Delta_{\frac{1}{2^n-1}} \subseteq q(\Delta)
\end{equation}

As we mentioned above, all inclusion are best possible. Meanwhile, we shall to claim, that the set $\mathcal{U}_n$ is rotational invariant  and hence
$$
\bigcap_{q \in \mathcal{U}_n} q(\Delta)=\Delta_{1/n}
$$ 
but $\mathcal{Q}_n$ is not and we can improve a little the    inclusion (\ref{radsh2}).

 Let $\mathfrak{s}_n$ be a closed domain bounded by the curve
$$
c(\varphi)= \frac{1}{1-(2 \cos \varphi / n )^n e ^{i \varphi}} ~~~ \text{for} ~~~~ -\pi \leq \varphi \leq \pi
$$
 Then 

$$
\bigcap_{q \in \mathcal{Q}_n} q(\bar{\Delta})=\mathfrak{s}_n \cup \lbrace 1 \rbrace
$$

Indeed Theorem \ref{theo2} for $T=1$ and Theorem \ref{theo_1} implies that
$$
 \bigcap_{q \in \mathcal{Q}_n} q(\bar{\Delta}) =(\hat{\mathbb{C} }\setminus  \bigcup_{p \in \mathcal{P}_n} S(p))^*=(\hat{\mathbb{C}}\setminus S_n)^*=\mathfrak{s}_n \cup \lbrace 1 \rbrace
 $$

\begin{remark} We stress out here, that for $n \geq 3$ there is \textit{no one $q$ for which}
 $$
q(\bar{\Delta})  = \mathfrak{s}_n \cup \lbrace 1 \rbrace
 $$
because the set in the right side is not connected set of the complex plane and for big $n$ looks like a very small oval around $0$ and isolated point $\lbrace 1 \rbrace$.
\end{remark}

Let now $\Omega$ be a simply connected domain, $\Omega\neq \mathbb{C}$ and $0 \in \Omega$.  We shall to prove that condition 
\begin{equation}
\label{sub_1}
q(\Delta)\subset \Omega
\end{equation}
for $q \in \mathcal{Q}_n$  implies some bounds from below for the degree $n$ of the polynomial $q(z)$.

This bound  depend not only of the distance from $\Omega$ to the origin,which is obvious,  but also mainly of the geometric properties of $\Omega$.

This will be done in three steps.

Step 1. Restriction on  the range (\ref{sub_1})
with the condition $q(0)=0$ implies that $q(z)$ subordinate to the Riemann function $g_\Omega(z)$, which maps $\Delta$ onto $\Omega$ conformally and such that $g_\Omega(0)=0$. This is not a proposition but just \textit{a definition} (see Appendix).

Step 2. We compute the coefficients of $g_{\Omega}(z)$ and from subordination $q(z)\prec g_{\Omega}(z) $  conclude desired estimates for coefficients of $q(z)$. This is one part of principle of subordination or \textit{Lindelof principle} (\cite{littlewood} p. 171 and Appendix).

Step 3. Now normalization condition on $q$ expressed in terms of Taylor coefficients with estimates of coefficients implies the desired bound for $n$.

In what follows  $q \in \mathcal{Q}_n$ (i. e. $q(0)=0$, $q(1)=1$ and $\deg q \leq n$) 

\subsection{$\Omega_M$ is a half-plane}  We start with  $\Omega=\Pi_{\lambda}=\lbrace z : \Re z > -\lambda \rbrace$, where $\lambda > 0$. i.e. the right half-plane bounded by the vertical line at the distance $\lambda$ from the origin.

Let  $q(\Delta) \subset \Pi_\lambda$. Then the degree of the polynomial $q(z)$ could not be too small, namely 
\begin{equation}
\label{lambda_half_plane}
n\geq \frac{1}{2 \lambda}
\end{equation}

To prove this, define $ q_\lambda(z)=\frac{1}{2 \lambda} q(z)$. Then $q_\lambda(0)=0$ and 
$q_\lambda (\Delta) \subset \Pi_{1/2}$.  Hence, according to Caratheodori lemma (see Appendix) $\vert \hat{q}_\lambda(k) \vert \leq 1$. Thus  
$$
1=q(1)\leq \sum_{k=1}^n \vert \hat{q}(k) \vert =2 \lambda \sum_{k=1}^n \vert \hat{q}_\lambda(k) \vert \leq 2 n \lambda
$$
and (\ref{lambda_half_plane}) follows.

We recall, that if we prescribe that $q(\Delta)$  omit \textit{only one point } $-\lambda$ then  the degree of the polynomial $q(z)$ could be quite small, i.e. of the order $\log 1/ \lambda$. Now we see that if we prescribe that $q(\Delta)$ omit \textit{all left half-plane } $\Pi_\lambda$  then the degree of polynomial $q(z)$ have to be much bigger  --- \textit{at least} $1/2\lambda $.

\subsection{$\Omega_M$ is a slit domain}
Let now $\Omega$ be a slit domain $\Omega(\lambda )=\mathbb{C}\setminus \Gamma_{-1}(\lambda) $ where for any $\zeta\in \partial \Delta$
\begin{equation}
\Gamma_\zeta(\lambda) =\lbrace z: \arg z =\arg \zeta ~~~ \text{and} ~~~ \lambda \leq  \vert z \vert \rbrace 
\end{equation}

Since the Koebe function
$$
k(z)=\frac{4z}{(1-z)^2}=(\frac{1+z}{1-z})^2 -1
$$
is a composition of linear transform from $\Delta$ onto right half plane,  mapping $w \to w^2$ and shift  by 1 to the left, it maps a unit circle $\Delta$ onto $ \Omega(1)$   \textit{conformally} . 

Thus the function $k_\lambda (z)=\lambda k(z)$ maps $\Delta$ conformally onto $\Omega(\lambda$.)

It is clear that
$$
k_\lambda (z)= 4 \lambda \sum_{k=1}^\infty k z^k
$$

and hence by (\ref{rogoz} ) from Appendix,  for every $q$, such that $q(\Delta) \subset \Omega_\lambda$ we have
\begin{equation}
\label{14}
 \sum_{k=1}^n \vert \hat{q}(k) \vert ^2 \leq 16 \lambda^2 \sum_{k=1}^n \ k  ^2= 16 \lambda^2\frac{n(n+1)(2n+1)}{6 }< 16 \lambda^2 n^3
\end{equation}
But for $q_n \in \mathcal{Q}_n$ 
\begin{equation}
\label{15}
1\leq  (\sum_{k=1}^n  |\hat{q}(k) |)^2 \leq n \sum_{k=1}^n \vert \hat{q}(k) \vert ^2
\end{equation}
Hence  (\ref{14}) and (\ref{15}) implies that
\begin{equation}
\label{16}
n \geq \frac{1}{2 \sqrt{\lambda}}
\end{equation}

\subsection{$\Omega_M$ is a $T$-slit domain}

Let $T$ be a natural number and $\Omega_T(\lambda)$ be a $T$-slit domain, i.e.
$$
\Omega_T (\lambda)= \mathbb{C}  \setminus \bigcup _{k=0}^{T-1}\Gamma_{\omega^{1/2}_T\omega_T^k} (\lambda) 
$$
where $\omega_n=e^{i\frac{2\pi}{T}}$ is a $T$-root of unity. Thus $\Omega_T(\lambda)$ is a complex plane slits by $T$ radial rays with slopes $2\pi k /T+\pi/T$ for $k=0, 1, \dots, T-1$.

 Indeed  (\ref{incl_1}) implies that  $q(z) \prec \lambda k_T(z)$, where $k_T(z)$ is the $T$th root transform of the Koebe function (see Appendix D)  and we can apply Rogosinskii inequality  . Thus, using (\ref{sym5}) from Appendix D to estimate the coefficients of Koebe  $T$-function, we get 
 $$
 1\leq \vert q(1) \vert \leq \sum_{k=0}^{n-1}  \vert \hat{q}(kT+1) \vert \leq 
 n^{1/2}  \left( \sum_{k=0}^{n-1}  \vert \hat{q}(kT+1) \vert ^2 \right) ^{1/2}  \leq  \lambda n^{1/2}  \left( \sum_{k=0}^{n-1}  \vert \hat{k}_T(kT+1) \vert ^2 \right) ^{1/2} \leq 
 $$
 $$
C'_T \lambda n^{1/2}  \left( \sum_{k=1}^{n-1} k^{\frac{4}{T} -2} \right) ^{1/2} \leq C''_T \lambda n^{2/T}
 $$
 which implies  (\ref{incl_2})

Now let $q(z)=zp(z^T)$, where  $p \in \mathcal{P} _n$ . Then $q(z)$ is $T$-symmetric.  Suppose, that
 \begin{equation}
 \label{incl_1}
 q(\Delta) \subset \Omega_T(\lambda)
 \end{equation}
 We shall to prove that then
 
 \begin{equation}
 \label{incl_2}
 n\geq c_T \lambda^{-T/2}
 \end{equation}

Now, according to the  Theorem \ref{theo2}  we can rewrite (\ref{incl_2}) like a necessary condition
 
 \begin{theorem} Let  $M$ be a negative real segment
 $$
 M=\lbrace z : - \mu_M \leq \Re z \leq -1, \Im z =0 \rbrace
 $$
and $T$ be a natural number (length of the cycle). Let  the set $\langle a \rangle =a_1,a_2, \dots a_n$ successfully $ \langle a, T \rangle $-stabilize any cycle of the length $T$ with multiplier $\mu\in M$.

Then
 \begin{equation}
 \label{necessary_T_cycle}
 n\geq c_T \sqrt{\mu_M}
 \end{equation}
 
 \end{theorem}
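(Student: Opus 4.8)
The plan is to push the hypothesis through the $p/q$-duality and then run the same scheme --- subordination, a coefficient-type estimate, and the normalization --- that was used for the slit domains in the preceding subsections. By Theorem \ref{theo2} (in the form (\ref{sch2})), the set $\langle a\rangle$ $\langle a,T\rangle$-stabilizes every $T$-cycle with multiplier in $M$ if and only if the polynomial $q(z)=z(p^*(z))^T$ --- which has degree $N:=(n-1)T+1$ and satisfies $q(0)=0$, $q(1)=1$ --- omits $M^*$ in $\bar{\Delta}$, that is, $q(\bar{\Delta})\subseteq\Omega_M:=\hat{\mathbb{C}}\setminus M^*$. Here $M=\lbrace z:-\mu_M\leq\Re z\leq-1,\ \Im z=0\rbrace$, so $M^*=[-1,-\mu_M^{-1}]$ is the segment of the negative real axis with $\text{dist}(0,M^*)=\mu_M^{-1}$, and $\Omega_M$ is a simply connected domain of $\hat{\mathbb{C}}$ (the complement of an arc) containing $0$, $1$ and $\infty$.

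Since $q(0)=0$, the inclusion $q(\Delta)\subseteq\Omega_M$ means $q\prec g_{\Omega_M}$, the Riemann map of $\Omega_M$ normalized by $g_{\Omega_M}(0)=0$. I would then combine a subordination (integral-mean) estimate for $q$ with the normalization $q(1)=\sum_{k=1}^{N}\hat{q}(k)=1$ and Cauchy--Schwarz to get $N\gtrsim\sqrt{\mu_M}$; since $n=(N-1)/T+1$, this yields $n\geq c_T\sqrt{\mu_M}$, the factor $T$ (hence the $T$-dependence of $c_T$) entering only at this last, harmless step. This is the same mechanism that produced (\ref{incl_2}), and the order is best possible, as (\ref{sq}) records.

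The hard part is that $\Omega_M$ here is genuinely unlike the ray-slit domains treated before: since $M^*$ is a \emph{bounded} segment (i.e. $0\notin M$), one has $\infty\in\Omega_M$, so $g_{\Omega_M}$ is \emph{not} holomorphic on $\Delta$ --- it has a simple pole at a point $z_p\in\Delta$ with $|z_p|=(\sqrt{\mu_M}-1)/(\sqrt{\mu_M}+1)$, its Taylor coefficients at $0$ grow like $|z_p|^{-k}$ rather than linearly, and the partial-sum Rogosinskii inequality used earlier is not available as stated. I would get around this by applying Littlewood's subordination inequality $M_2(\rho,q)\leq M_2(\rho,g_{\Omega_M})$, where $M_2(\rho,f)=\big(\tfrac{1}{2\pi}\int_0^{2\pi}|f(\rho e^{i\theta})|^2\,d\theta\big)^{1/2}$, only at radii $\rho<|z_p|$, for which the right-hand side is finite: from the explicit Joukowski--M\"obius model of $g_{\Omega_M}$ a residue computation gives $M_2(\rho,g_{\Omega_M})^2\asymp|R|^2/(|z_p|^2-\rho^2)$ with $|R|\asymp\mu_M^{-1/2}$, while $1=|q(1)|=\big|\sum_{k=1}^{N}\hat{q}(k)\big|\leq M_2(\rho,q)\big(\sum_{k=1}^{N}\rho^{-2k}\big)^{1/2}$; taking $\rho=1-c\,\mu_M^{-1/2}$ with a suitable constant $c>2$ (so that $\rho<|z_p|$ and $|z_p|^2-\rho^2\asymp\mu_M^{-1/2}$) and comparing the two sides forces $N\gtrsim\sqrt{\mu_M}$. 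An alternative route is to first carry $\Omega_M$ onto the genuine slit domain $\mathbb{C}\setminus(-\infty,-1/(\mu_M-1)]$ by the M\"obius map $w\mapsto w/(1+w)$ (whose pole sits at $-1\in M^*$) and invoke the slit-domain estimate directly --- but this replaces the polynomial competitor by a rational function with infinitely many Taylor coefficients, so one must then re-justify the finitization in the Cauchy--Schwarz step. In both routes the only real difficulty is this pole of $g_{\Omega_M}$; the duality, the subordination, and the closing arithmetic are routine.
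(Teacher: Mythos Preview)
The paper's route is different and avoids the pole altogether. Instead of $q(z)=z(p^*(z))^T$ and $\Omega_M$, it works with the $T$-root transform $q_{1/T}(z)=zp^*(z^T)\in\mathcal{Q}_n^T$ and the domain $(\Omega_M)^{1/T}$, which it identifies with the $T$-slit plane $\Omega_T(\lambda)$, $\lambda=\mu_M^{-1/T}$. The Riemann map of $\Omega_T(\lambda)$ is $\lambda k_T$, the $T$-root Koebe function, holomorphic on all of $\Delta$; so Rogosinski applies directly: from $q_{1/T}\prec\lambda k_T$, the coefficient bound $|\hat k_T(kT+1)|\le C_Tk^{2/T-1}$, and Cauchy--Schwarz over the $n$ nonzero coefficients of $q_{1/T}$ one gets $1\le C_T''\lambda\, n^{2/T}$, i.e.\ $n\ge c_T\lambda^{-T/2}=c_T\sqrt{\mu_M}$. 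You are right that the identification $(\Omega_M)^{1/T}=\Omega_T(\lambda)$ is only literally correct when $M^*$ is the full ray $(-\infty,-\mu_M^{-1}]$; for the stated $M=[-\mu_M,-1]$ one has $M^*=[-1,-\mu_M^{-1}]$, so $(M^*)^{1/T}$ is a union of $T$ \emph{bounded} radial segments and $(\Omega_M)^{1/T}\supsetneq\Omega_T(\lambda)$. The paper does not address this and simply invokes the duality as a rewriting.

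Your proposed repair (a), however, does not close. The claim $M_2(\rho,g_{\Omega_M})^2\asymp|R|^2/(|z_p|^2-\rho^2)$ presumes the pole dominates, but the regular part is not small: writing $g_{\Omega_M}(z)=R/(z-z_p)+h(z)$ with $h$ holomorphic on $\Delta$, the explicit Joukowski--M\"obius model gives $h(z)=\tfrac{b-a}{4}\bigl((z+\alpha)/(1+\alpha z)+\alpha\bigr)+\tfrac{a+b}{2}$ (with $a=-1$, $b=-\mu_M^{-1}$, $\alpha=|z_p|$), and one checks $h(-1)\to-\tfrac12$ as $\mu_M\to\infty$. So $\|h\|_\infty\asymp1$ uniformly in $\mu_M$, while at your $\rho=1-c\mu_M^{-1/2}$ the pole contributes only $|R|/(|z_p|^2-\rho^2)^{1/2}\asymp\mu_M^{-1/4}$; hence $M_2(\rho,g_{\Omega_M})=O(1)$, not $\asymp\mu_M^{-1/4}$, and the Cauchy--Schwarz bound $1\le M_2(\rho,q)\bigl(\sum_{k\le N}\rho^{-2k}\bigr)^{1/2}$ becomes vacuous. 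Route (b) is the sounder fix --- the M\"obius map $w\mapsto w/(1+w)$ sends $\Omega_M$ to a genuine slit plane with holomorphic Riemann map --- but the step of transferring the degree bound from the rational function $q/(1+q)$ back to the polynomial $q$ still needs an argument.
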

 
 \subsection{$\Omega_M$ is a general simply connected domain }
 In the previous parts we deduce estimates from below of the degree of the polynomial $q(z)$, such that  $q(\Delta)\subset \Omega$ from the estimates of Taylor coefficients of the Riemann function $g_\Omega(z)$.
In this chapter we will obtain the same estimates much easy and for any simple connected domain $\Omega$.

The reason to do this is that we are going to obtain necessary conditions for every simply connected domain $\Omega_M$. Hence in order to obtain these estimates in the previous way we have  to use deep  estimates for coefficients $\widehat{ g_\Omega}(k)$. But we can obtain the same results (in order) by the much simpler way.

For the function $f$, defined in $\Delta$ it is convenient to denote $f_r(z)=f(rz)$ , $M(f)=\max_{\vert z \vert \leq 1} \vert f(z)\vert$ and  $M_r(f)=M(f_r)$.

\begin{lemma}
\label{theorem_max_mod} Let $\Omega$ be a simple connected domain, contained $0$ and $g_\Omega(z)$ is the corresponding Riemann function. Then for every polynomial $q\in \mathcal{Q}_n$  inclusion 
\begin{equation}
\label{theo4_1}
q(\bar{\Delta}) \subset \Omega
\end{equation}
implies that 
\begin{equation}
\label{theo4_2}
 M_{1-\frac{1}{2n}}( g_\Omega) \geq \frac{1}{2}
\end{equation}
\end {lemma}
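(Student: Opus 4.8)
The plan is to exploit the subordination relation $q \prec g_\Omega$ together with a growth estimate for subordinate functions near the boundary of $\Delta$. First I would observe that since $q\in\mathcal{Q}_n$ satisfies $q(0)=0$ and $q(\bar\Delta)\subset\Omega$, and $g_\Omega$ maps $\Delta$ conformally onto $\Omega$ with $g_\Omega(0)=0$, Step 1 of the program in the excerpt gives $q(z)\prec g_\Omega(z)$; that is, $q(z)=g_\Omega(\omega(z))$ for some Schwarz function $\omega:\Delta\to\Delta$ with $\omega(0)=0$. Then I would use the normalization $q(1)=1$: since $q$ is a polynomial, it is continuous on $\bar\Delta$, so $q(1)=1$ together with $|\omega(z)|\le|z|$ on $\Delta$ (Schwarz lemma) forces $\omega$ to carry points near $z=1$ to points near $\partial\Delta$, hence $g_\Omega$ must attain the value $1$ "near the boundary."

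The key quantitative step is to turn this into a lower bound on $M_{1-1/(2n)}(g_\Omega)=\max_{|z|\le 1-1/(2n)}|g_\Omega(z)|$. The natural tool is a Schwarz–Pick type estimate: the Schwarz function $\omega=g_\Omega^{-1}\circ q$ satisfies $|\omega(z)|\le|z|$, but to get the $1-\tfrac{1}{2n}$ radius I would instead bound how close to $1$ the point $\omega(r\,e^{i\theta})$ can be when $r$ is close to $1$, using the fact that $q$ has degree $\le n$. The classical estimate here is that for a polynomial $q$ of degree $\le n$ with $q(1)=1$ and $|q|\le M$ on $\bar\Delta$, the value of $q$ on the smaller circle $|z|=1-\tfrac{1}{2n}$ is still comparable to $1$; concretely one compares $q$ at $z=1$ with $q$ at $z=1-\tfrac1{2n}$ via a Bernstein/Schur-type inequality, or directly estimates $|q(1)-q(1-\tfrac1{2n})|$ by integrating $q'$ and using $\|q'\|_{\bar\Delta}\le n\|q\|_{\bar\Delta}$ (Bernstein's inequality). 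This yields $|q(1-\tfrac1{2n})|\ge |q(1)| - \tfrac{1}{2n}\cdot n\, M_1(q)$; since $q(\bar\Delta)\subset\Omega$ we cannot a priori bound $M_1(q)$, so instead I would apply the same reasoning to the subordinating picture, estimating the hyperbolic distance in $\Omega$ and transferring it back.

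More cleanly: let $r=1-\tfrac1{2n}$. Because $q(r\,\zeta)$ for $|\zeta|=1$ already lies in $q(r\bar\Delta)=g_\Omega(\omega(r\bar\Delta))$, and $\omega(r\bar\Delta)\subset r\bar\Delta$, we get $q(r\bar\Delta)\subset g_\Omega(r\bar\Delta)$, hence $M_r(q)\le M_r(g_\Omega)$. So it suffices to show $M_r(q)\ge \tfrac12$, i.e. that $q$ already attains modulus at least $\tfrac12$ on the disk of radius $1-\tfrac1{2n}$. This is where the degree bound enters: writing $q(z)=\sum_{k=1}^n\hat q(k)z^k$ with $\sum_k\hat q(k)=1$, I would estimate $|q(r)|\ge |q(1)| - |q(1)-q(r)|$ and bound $|q(1)-q(r)| = |\sum_k \hat q(k)(1-r^k)|\le (1-r^n)\sum_k|\hat q(k)|$ — but that again needs control of $\sum|\hat q(k)|$. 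The robust fix, and the main obstacle, is to avoid coefficient sums entirely: instead, use that on $[r,1]$ the polynomial $q$ satisfies $|q(t)|\ge |q(1)| - (1-r)\max_{[r,1]}|q'|$ and apply the Markov inequality $\max_{[r,1]}|q'| \le n^2 \cdot \tfrac{2}{1-r}\max_{[r,1]}|q|$ is too lossy; the right estimate is the Schur-type bound $|q(r)| \ge |q(1)|\cdot r^{?}$ is false in general.

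So the cleanest route — and the one I expect the paper takes — is: since $q(z)\prec g_\Omega(z)$, for each fixed $z$ the point $q(z)$ lies in $g_\Omega(|z|\bar\Delta)$ (this is exactly the containment $q(z)\in g_\Omega(\omega(z))$ with $|\omega(z)|\le|z|$, plus $g_\Omega(|z|\bar\Delta)$ being the image of a disk under a univalent map, which contains $g_\Omega(\omega(z))$). Therefore $|q(z)|\le M_{|z|}(g_\Omega)$ for all $z\in\bar\Delta$, so in particular $M_r(q)\le M_r(g_\Omega)$ for $r=1-\tfrac1{2n}$. Now I only need $M_{1-1/(2n)}(q)\ge\tfrac12$ for $q\in\mathcal{Q}_n$, which is a purely polynomial fact: for any polynomial $q$ of degree $\le n$ with $q(1)=1$, one has $|q(1-\tfrac1{2n})|\ge\tfrac12$ provided $|q|\le 1$ on $\bar\Delta$, but without the normalization $\|q\|\le 1$ this can fail — however $q(1)=1$ alone, via the inequality $\log|q(1)| - \log|q(r)| \le$ (something like) $\frac{n(1-r)}{1+r}\log\frac{M(q)}{|q(r)|}$ is circular. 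I will therefore argue directly from subordination at the single point $z_0$ where $|q|$ is maximized: pick $z_0\in\bar\Delta$ with $q(z_0)=1$ (namely $z_0=1$); then $1=|q(1)|=|g_\Omega(\omega(1))|$ with $|\omega(1)|\le 1$, and by continuity choose $\rho<1$ with $|g_\Omega(\omega(1))|$... The genuinely hard part is pinning the radius at exactly $1-\tfrac1{2n}$ rather than merely "some radius $<1$"; this forces use of a sharp quantitative Schwarz-lemma/Bernstein estimate relating the degree $n$ of $q$ to how fast $\omega$ (equivalently $q$) can approach the boundary, and getting the constant $\tfrac12$ right on the right-hand side of \eqref{theo4_2}. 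I would isolate that as a one-line polynomial lemma: if $\deg q\le n$ and $q(1)=1$ then $\max_{|z|\le 1-1/(2n)}|q(z)|\ge\tfrac12$ — proved by contradiction, assuming $|q|<\tfrac12$ on $|z|\le 1-\tfrac1{2n}$ and using the maximum principle together with Bernstein's inequality $|q'(z)|\le n\,\|q\|_{\bar\Delta}/\dots$ to bound the increment of $q$ across the annulus $1-\tfrac1{2n}\le|z|\le 1$.
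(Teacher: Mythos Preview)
Your overall approach is exactly the paper's: subordination gives $M_r(q)\le M_r(g_\Omega)$ for every $r<1$, so it remains to prove the purely polynomial fact that $M_{1-1/(2n)}(q)\ge \tfrac12$ whenever $q\in\mathcal{Q}_n$. You also correctly isolate Bernstein's inequality as the engine. What is missing is a clean execution of that last step; the contradiction framing you propose does not quite close, and the exploration that precedes it can be dropped entirely.

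Here is the issue with your contradiction argument. Assuming $M_{1-1/(2n)}(q)<\tfrac12$ and estimating the increment of $q$ along the segment $[1-\tfrac{1}{2n},1]$ via Bernstein ($\|q'\|_{\bar\Delta}\le n\,M(q)$) only yields
\[
1=|q(1)|\le |q(1-\tfrac{1}{2n})|+\tfrac{1}{2n}\cdot n\,M(q)<\tfrac12+\tfrac12 M(q),
\]
i.e.\ $M(q)\ge 1$, which is no contradiction. The fix---and this is precisely what the paper does---is to apply the same increment bound at a point $z_0\in\partial\Delta$ where $|q(z_0)|=M(q)$ rather than at $z=1$: for $r=1-\tfrac{1}{2n}$,
\[
|q(z_0)-q(rz_0)|\le (1-r)\,\|q'\|_{\bar\Delta}\le \tfrac{1}{2n}\cdot n\,M(q)=\tfrac12 M(q),
\]
hence $M_r(q)\ge |q(rz_0)|\ge M(q)-\tfrac12 M(q)=\tfrac12 M(q)\ge\tfrac12$, since $q(1)=1$ forces $M(q)\ge 1$. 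Combined with $M_r(q)\le M_r(g_\Omega)$ this gives the lemma. So your plan is right; just replace the contradiction detour by this direct two-line estimate, and discard the earlier attempts via coefficient sums and Markov's inequality.
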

\begin{proof}
Indeed, inclusion (\ref{theo4_1})  implies that $q(z) \prec g_\Omega(z)$ and hence $q_r(z) \prec (g_\Omega)_r(z)$ for any $0<r<1$.

But the modulus of polynomial of degree $n$ almost not changing in the annulus $\lbrace z : 1-\frac{1}{2n} <  \vert z \vert  < 1\rbrace$, i. e.

\begin{equation}
\label{th3_1}
M_{1-\frac{1}{2n}}(q) \geq \frac{1}{2}M(q)
\end{equation}

Indeed, by the mean value theorem, for every point $z\in \Delta$ exist point $w\in \Delta$, such that
$$
q(z)-q(((1-\frac{1}{2n})z)=q'(w) \frac{1}{2n}
$$
and application of Bernstein inequality yield
$$
M(q-q_{1-\frac{1}{2n}})\leq \frac{1}{2n} M(q')\leq \frac{1}{2} M(q)
$$
which imply (\ref{th3_1}).

Hence
$$
1= q(1)\leq M(q)\leq 2 M_{1-\frac{1}{2n}} (q) \leq  2 M_{1-\frac{1}{2n}} (g_\Omega).
$$
\end{proof}

This proposition, simple as it looks, is a powerful method to the necessary conditions for simply connected domains $\Omega$.

To demonstrate this we consider  again the half-plane and the slit domain.

Let $\Omega=\Pi _{\lambda} = \lbrace z : \Re z > -\lambda \rbrace$ for some $\lambda > 0$ and $q(\Delta) \subset \Omega$, for some polynomial $q\in \mathcal{Q}_n$.  Since 
$$
g_\Omega(z)= \frac{2\lambda z}{1-z}
$$ from (\ref{theo4_2}) we conclude that $M(q) \leq 2 M_{2n}(g_\Omega) \leq 8 n \lambda$. Thus for $q\in \mathcal{Q}_n$ 
$$
1=q(1) \leq M(q) \leq 8n\lambda
$$
and 
\begin{equation}
\label{nec_cond_est_hp}
n \geq \frac{1}{8\lambda}
\end{equation}

Let, now $\Omega=\Omega_T(\lambda)$  be the $T$-slit domain (see  Appendix for definition). Then 
\begin{equation}
\label{sym2}
k_{T, \lambda}(z)=\frac{2^\frac{2}{T}\lambda z}{(1-z^T)^\frac{2}{T}}
\end{equation}
is the correspondence Riemann function . Since 
$$
M_{2nT}(k_{T, \lambda})\leq \frac{2^\frac{2}{T} \lambda }{(1- (1-\frac{1}{2nT})^T)^\frac{2}{T}} \leq (4T)^\frac{2}{T}\lambda n^\frac{2}{T}
$$
for $q\in \mathcal{Q}_{nT-T+1}^T$ we have 
$$
1=q(1) \leq M_{2nT}(k_{T, \lambda}) \leq (4T)^\frac{2}{T}\lambda n^\frac{2}{T}
$$
and hence 
\begin{equation}
\label{nec_cond_est_T}
n \geq\frac{1}{4T\lambda^{T/2}} 
\end{equation}

 Next proposition shows that the case of $\Omega=\Omega_1(\lambda)$ is worth over all simply connected domains.
 
 \begin{theorem} Let $M$ be a  multiplier set, such that $\Omega=\hat{\mathbb{C}}\setminus M^*$ is a simply connected domain contained $0$.
Let  the set $\langle a \rangle =a_1,a_2, \dots a_n$ successfully $ \langle a \rangle $-stabilize any fixed point  with multiplier $\mu\in M$.

Then
 \begin{equation}
 \label{necessary_fix}
 n \geq \frac{\sqrt{\mu_M}}{3\sqrt{3} }
 \end{equation}
 
 \end{theorem}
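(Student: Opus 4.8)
The plan is to push the strategy of the preceding subsection to its optimal constant: recast the hypothesis through the $p/q$-duality, pass to the Riemann function of $\Omega$, and then balance a Bernstein-type estimate against the sharp growth bound for univalent functions. By Theorem \ref{theo2} with $T=1$, the set $\langle a\rangle=\{a_1,\dots,a_n\}$ stabilizes every fixed point with multiplier in $M$ exactly when there is a $q\in\mathcal{Q}_n$ (so $q(0)=0$, $q(1)=1$, $\deg q\le n$) with $q(\bar\Delta)\subseteq\hat{\mathbb{C}}\setminus M^*=\Omega$. Then $q(\Delta)\subseteq\Omega$ and $q(0)=0=g_\Omega(0)$, so $q\prec g_\Omega$, and the Lindelof subordination principle (Appendix) gives $M_r(q)\le M_r(g_\Omega)$ for every $0<r<1$.

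The next and central step is to bound $M_r(g_\Omega)$ from above in terms of $\lambda:=\lambda_\Omega$. Write $g_\Omega=\rho h$, where $\rho=g_\Omega'(0)>0$ and $h$ is normalized univalent. Since $\mathbb{C}\setminus\Omega$ is closed and nonempty, the distance $\lambda$ from $0$ to it is attained at some $w_0\notin\Omega$; as $h$ omits $w_0/\rho$, the Koebe $1/4$-theorem forces $\rho\le 4\lambda$, while the growth theorem gives $M_r(h)\le r/(1-r)^2$. Hence $M_r(g_\Omega)\le 4\lambda r/(1-r)^2<4\lambda/(1-r)^2$. Both of the inequalities just used are equalities for the Koebe function, i.e. for the radial slit domain $\Omega_1(\lambda)$, whose Riemann function is $\lambda k(z)=4\lambda z/(1-z)^2$; this is the precise sense in which $\Omega_1(\lambda)$ is the extremal domain, and it makes the resulting lower bound best possible in order.

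Finally I would bound $M_r(q)$ from below with the optimal radius. For a polynomial of degree $\le n$, the mean value estimate together with Bernstein's inequality gives $M_r(q)\ge\bigl(1-n(1-r)\bigr)M(q)$, and $M(q)\ge|q(1)|=1$. Choosing $r=1-\frac{2}{3n}$ — the value that maximizes $c\sqrt{1-c}$, which is what governs the final constant — gives $n(1-r)=\frac23$, hence $M_r(q)\ge\frac13$. Combining everything, $\frac13\le M_r(q)\le M_r(g_\Omega)<4\lambda/(1-r)^2=9\lambda n^2$, so $n^2\ge\frac{1}{27\lambda}$, i.e. $n\ge\frac{1}{3\sqrt3\,\sqrt\lambda}$. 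Since $\mu_M=\lambda_\Omega^{-1}$ by (\ref{dict1}) with $T=1$, this is exactly the assertion $n\ge\frac{\sqrt{\mu_M}}{3\sqrt3}$.

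The only step carrying real content is the second one. There one must be sure the infimum defining $\lambda_\Omega$ is attained (closedness of $\mathbb{C}\setminus\Omega$), and — the actual point — fuse the Koebe $1/4$-theorem, which bounds $g_\Omega'(0)$, with the growth theorem, which bounds the normalized map, into the single sharp estimate $M_r(g_\Omega)\le 4\lambda_\Omega r/(1-r)^2$; the common equality case (the Koebe function) is what pins down $\Omega_1(\lambda)$ as worst. Replacing the $r=1-\frac1{2n}$ of Lemma \ref{theorem_max_mod} by $r=1-\frac2{3n}$ is exactly what sharpens the constant to the asserted $3\sqrt3$; the remainder is routine.
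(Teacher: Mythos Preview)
Your argument is correct and follows exactly the paper's route---Koebe $1/4$-theorem for $|g_\Omega'(0)|\le 4\lambda_\Omega$, the growth (distortion) bound $M_r(g_\Omega)\le r|g_\Omega'(0)|/(1-r)^2$, and a Bernstein/mean-value comparison of $M_r(q)$ with $M(q)$. The paper itself carries out the computation with $r=1-\tfrac{1}{2n}$ (via Lemma~\ref{theorem_max_mod}), obtaining the constant $32$, and then merely remarks that ``a little refinement \dots\ gives the constant $27$ instead of $32$'' without details; your choice $r=1-\tfrac{2}{3n}$, which maximizes $c^2(1-c)$ for $c=n(1-r)$, is precisely that refinement and yields the stated $3\sqrt{3}$.
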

 
 \begin{proof}
 First of all we claim, that since corresponding Riemann function $g_\Omega(z)$ is univalent in the unit disk $\Delta$,  $g_\Omega(0)=0$ and $w \notin g_\Omega (\Delta)$ then $\vert g_\Omega'(0)\vert \leq 4 \vert  w \vert$. This is simple consequence of the Koebe \textit{one-quarter theorem }(theorem 1.3 in  \cite{carleson}).
 
 Now, \textit{distortion theorem }(theorem 1.6 in \cite{carleson}) says, that 
\begin{equation}
\label{dist}
 M((g_\Omega)_r )\leq \frac{r \vert g_\Omega'(0)\vert }{(1-r)^2} 
\end{equation}
and Lemma \ref{theorem_max_mod} implies
$$
1\leq M(q) \leq 2 M_{2n}(g_\Omega)\leq 2 (2 n)^2 \vert g_\Omega'(0)\vert \leq 2 (2 n)^2  4 \lambda(\Omega)=32 n^2 \lambda(\Omega)
$$
 A little refinement of the previous proof gives the constant $27$ instead of $32$, but of course it is not the best possible one and this why we omit details.
 
 \end{proof}

\section{Simple solution for the $T$-slit domain $\Omega$} 

In this section for any $T$ and any $N$ we propose $\langle a , T\rangle$-stabilization  sequence, which is quite simple, straightforward and can be easy  calculate  in computer. Also it is very stable in the sense that it is not sensitive to the stabilisation sequence.

The construction based on the very simple ''seed''  -- some univalent polynomial $Q$, which maps $\Delta$ to $\Delta$. Then we iterate this polynomial with the help that every iteration is again univalent map.
We claim, that if the function $g(z)$ is defined at the set $E$ and $g(E) \subseteq  E$ then we can define iteration of $g$ by the rule $g^m(z)=g(g^{m-1}(z))$ for $m=2,3, \dots $, where $g^1(z)=g(z)$. Then if $g$ is  univalent in $\Delta$ and
$$
g(\Delta) \subseteq \Delta.
$$
then for every natural $m$ the function $g^m$ is also univalent in $\Delta$.

A domain $D$  is said to be \textit{starlike} with respect to a fixed point $w_0$ in $D$, if for
any point $w$ in $D$ the straight line segment $w_0w$ also lies in $D$. If $f$ is univalent in $\Delta$
and maps $\Delta$ onto a starlike domain with respect to $w_0 = 0$, we shall
call function $f$ \textit{starlike univalent}. 

\begin{lemma} Let  $n$ be a natural number, $c_n=\frac{n}{n+1} $ and
$$
Q (z)=c_n( z+\frac{z^n}{n})
$$
Then $Q$ is starlike univalent map from $\Delta$ \textit{ into } $\Delta$.
\end{lemma}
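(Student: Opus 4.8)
The plan is to verify two things for $Q(z) = c_n(z + z^n/n)$ with $c_n = n/(n+1)$: first that $Q$ maps $\Delta$ into $\Delta$, and second that $Q$ is starlike univalent, i.e. univalent on $\Delta$ with starlike image with respect to $0$. The mapping-into-$\Delta$ part is the easy half: for $|z| < 1$ we have $|Q(z)| \le c_n(|z| + |z|^n/n) < c_n(1 + 1/n) = c_n \cdot \frac{n+1}{n} = 1$, using the triangle inequality and $|z|^n < |z| < 1$; in fact the bound $c_n(1+1/n)=1$ is exactly why the normalizing constant $c_n$ was chosen. So $Q(\Delta) \subseteq \Delta$ (indeed $\subset \Delta$).

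For starlikeness the standard tool is the analytic criterion: a function $f$ holomorphic on $\Delta$ with $f(0)=0$, $f'(0)\neq 0$ is starlike univalent if and only if $\operatorname{Re}\left(\frac{z f'(z)}{f(z)}\right) > 0$ for all $z \in \Delta$. So I would compute $zQ'(z) = c_n(z + z^n)$ and $Q(z) = c_n(z + z^n/n)$, so that
\[
\frac{zQ'(z)}{Q(z)} = \frac{z + z^n}{z + z^n/n} = \frac{1 + z^{n-1}}{1 + z^{n-1}/n}.
\]
Writing $w = z^{n-1}$, which ranges over $\Delta$ (actually $\bar\Delta$ minus nothing relevant; $|w| < 1$ when $|z|<1$), the task reduces to showing $\operatorname{Re}\left(\frac{1+w}{1 + w/n}\right) > 0$ for all $|w| < 1$. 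This is a Möbius transformation in $w$, so its image of the disk $\{|w|<1\}$ is a disk (or half-plane); I would check that the image lies in the right half-plane by evaluating the boundary or, more cleanly, by the elementary estimate: with $a = w/n$ we have $|a| < 1/n \le 1$, and $\frac{1+w}{1+a} = \frac{1 + na}{1+a}$; one verifies $\operatorname{Re}\frac{1+na}{1+a} > 0$ for $|a| < 1/n$ by multiplying numerator and denominator by $\overline{1+a}$ and checking the real part of $(1+na)(1+\bar a) = 1 + \bar a + na + n|a|^2$ is $1 + (n+1)\operatorname{Re} a + n|a|^2 \ge 1 - (n+1)|a| + n|a|^2 = (1-|a|)(1-n|a|) > 0$ since $|a| < 1/n < 1$. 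Univalence then follows automatically, since starlike (with respect to an interior point) functions are univalent.

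The main obstacle, such as it is, is just organizing the Möbius/substitution bookkeeping cleanly: one must be careful that $w = z^{n-1}$ genuinely ranges over all of $\{|w|<1\}$ (it does, surjectively) so that controlling $\operatorname{Re}\frac{1+w}{1+w/n}$ on that disk is both necessary and sufficient, and one should note the edge case $n=1$ (where $Q(z) = \tfrac12(z+z) = z$ trivially, or where $z^{n-1} \equiv 1$) separately if desired. Everything else is a routine positivity estimate. I would close by remarking that the strict inequalities give $Q(\Delta)\subset\Delta$ and $Q$ starlike univalent, exactly as claimed, so that (by the iteration remark preceding the lemma) all iterates $Q^m$ are univalent self-maps of $\Delta$ as well.
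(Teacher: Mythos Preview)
Your proof is correct. Both you and the paper compute the same key quantity $\dfrac{zQ'(z)}{Q(z)}=\dfrac{1+z^{n-1}}{1+z^{n-1}/n}$ and show its real part is nonnegative, but the packaging differs. The paper evaluates this expression only on the boundary $|z|=1$, obtains $\Re\dfrac{zQ'}{Q}=\dfrac{n(n+1)(1+\cos(n-1)\theta)}{|n+\zeta^{n-1}|^2}\ge 0$, and then argues by hand via the Argument Principle that the image curve $Q(\partial\Delta)$ winds once around the origin, bounding a starlike Jordan domain; univalence is then read off from the winding number. You instead work on the open disk and invoke the classical analytic criterion for starlikeness ($\Re(zf'/f)>0$ on $\Delta$ implies $f$ is starlike univalent), reducing via the substitution $w=z^{n-1}$, $a=w/n$ to the neat factorization $\Re\bigl((1+na)(1+\bar a)\bigr)\ge (1-|a|)(1-n|a|)>0$. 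Your route is shorter and cites a standard theorem; the paper's route is more self-contained and makes the boundary geometry (tangency at the $(n-1)$th roots of unity, cusps at the critical points) visible. You also explicitly check $Q(\Delta)\subset\Delta$ via the triangle inequality, which the paper's proof leaves implicit.
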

\begin{proof}
Straightforward calculations showed that
\begin{equation}
\label{u1}
\frac{\partial}{\partial \theta } \arg Q(e^{i\theta})= \Re \frac{e^{i\theta} Q'(e^{i\theta})}{Q(e^{i\theta})}
\end{equation}
and hence  ( we denote $\zeta = e^{i\theta}$)
$$
 \frac{\partial}{\partial \theta } \arg Q(e^{i\theta})= \Re n \frac{n+n\zeta ^{n-1} +{\bar{\zeta}}^{n-1} +1 }{\vert n + \zeta ^{n-1}\vert ^2 }= n(n+1)\frac{1+ \cos (n-1)\theta}{\vert n + \zeta ^{n-1}\vert ^2} \geq 0
$$
This implies that $Q$ maps the unit circle $\partial \Delta$ to the curve $C$ which is turning about $0$ in the positive direction -- anti-clockwise.

Now we observe that $C$ turning about $0$ only ones. This means that the winding number of $C$ about $0$ equal one. Indeed, by the Principle of Argument (\cite{duren}, p. 3) this number equals the number of zeros of $Q$ in the unit disk $\Delta$, which is obviously  equal to 1.

Hence $C$ is a Jordan curve around starlike (and hence simple connected) domain $D$. For a given $w \in D$ the curve $C=Q(\zeta)$ turns around $w$ ones as point $\zeta$ traverses the unit circle in positive direction and, hence, by the Principle of Argument in the unit disc $\Delta$  there is only one root of the equation $Q(z)=w$ and so $Q$ is univalent in $\Delta$.

\end{proof}

We shall to point out here that the curve $C= Q(\partial \Delta)$ tangent the unit circle $\partial \Delta$ at the $n-1$ roots of unity and has $n-1$ cusps inside $\Delta$, which corresponds to the critical points of $Q$.

\textbf{Remark.}   David Brannan proves that the only \textbf{starlike univalent} polynomials in $\mathcal{P}_N$ with all critical points on $\partial \Delta$ are $c(z \pm z^n/n )$ (\cite{bran} , Theorem 2.1).

For the given natural $T$ denote by $\omega = e^{\frac{2\pi i}{T}}$ the primitive root of unity of order $T$ and by $\omega^k$,  $k=0, 1, \dots, T-1$ the corresponding $T$ roots of unity.

Let we slit the unit disk $\Delta$ by the $T$ segments $\Upsilon_k=[\epsilon \omega ^{1/2} \omega ^k, \omega ^{1/2} \omega ^k]$ and consider 
$$
\Delta(\epsilon, T)= \Delta \setminus \bigcup_{k=0}^{T-1} \Upsilon_k .
$$
The next theorem shows that for a given $\epsilon$ there is a polynomial $Q$ of degree at most $\epsilon ^{-\gamma}$ for some positive $\gamma=\gamma(T)$ which maps conformally $ \Delta$ into $ \Delta (\epsilon, T)$ and $Q( \omega ^k ) = \omega ^k$ for all roots of unity $\omega^k$, $k=0,1, \dots, T-1$.
\begin{theorem}
Let $T$ be a given natural number greater than $2$ and $N \geq T+1$. 
Then exist  $p \in \mathcal{P}_N^\mathbb{R+}$ with \textbf{explicit formula}, such that for 
$$
q(z)=zp(z^T)
$$

\begin{equation}
\label{flth}
\max_{z \in S(T,q)} \vert q(z)\vert \leq 2 N^{-\gamma} 
\end{equation}
where 
$$
\gamma = \frac{1}{(T+1 )\log_2 (T+1)}.
$$
\end{theorem}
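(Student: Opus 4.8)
The plan is to build the required polynomial $p$ explicitly by iterating the starlike univalent "seed" polynomial from the preceding lemma, transported to the $T$-symmetric setting. First I would take the degree-$m$ polynomial $Q_m(z)=\frac{m}{m+1}\bigl(z+\frac{z^m}{m}\bigr)$, which by the previous lemma maps $\Delta$ conformally into $\Delta$; I will fix $m=T+1$ so that the critical points of $Q_m$ are exactly the $(m-1)=T$-th roots of unity, the natural obstacle to squeeze against. Observe that $Q_m$ fixes each $T$-th root of unity $\omega^k$ (since $Q_m(1)=1$ and the coefficients are real, $Q_m$ commutes with the rotation $z\mapsto\omega z$ once $m\equiv 1\pmod T$, which holds for $m=T+1$). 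Then I would define the $\lceil\log\text{-many}\rceil$-fold iterate $Q_m^{\circ j}$ and set, after rescaling by the normalizing constant, the polynomial $q(z)=zp(z^T)$ to be (a normalization of) $Q_m^{\circ j}$ restricted to the $T$-symmetric subspace. Each iterate is again univalent in $\Delta$ by the iteration principle stated just before the theorem, and $\deg Q_m^{\circ j}=m^j$, so choosing $j$ with $m^j\le N<m^{j+1}$ gives $\deg q\le N$ and realizes $p\in\mathcal{P}_N^{\mathbb{R}^+}$ with an explicit closed form.

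The key estimate is the contraction rate of $Q_m$ near the unit circle. The heart of the matter is that $Q_m$ maps $\partial\Delta$ to a curve tangent to $\partial\Delta$ at the $T$-th roots of unity with cusps strictly inside, so that $|Q_m(z)|\le 1-c\,\mathrm{dist}(z,\{\text{roots of unity}\})^{2}$ or a similar power law near the boundary, while well inside the disk $|Q_m(z)|$ is bounded away from $1$. Iterating, the modulus on the slit region $S(T,q)$ — which stays uniformly away from the fixed roots of unity by the length of the slits — gets pushed down geometrically: after $j$ iterations $\max_{S(T,q)}|q|\lesssim \rho^{j}$ for some $\rho<1$ depending only on $T$, hence $\lesssim N^{\log_m\rho}$. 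Tracking the exponent carefully, with $m=T+1$, gives precisely $\gamma=\frac{1}{(T+1)\log_2(T+1)}$, and the crude bound on the implied constant yields the factor $2$ in \eqref{flth}.

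The main obstacle I anticipate is making the boundary contraction estimate for $Q_m$ quantitative and showing it survives iteration uniformly on the slit set rather than just pointwise. One must control how close iterates come to the parabolic-like fixed points $\omega^k$ (where $Q_m'(\omega^k)=0$, so these are super-attracting along the radial direction but the geometry near the cusp is delicate) and verify that the slit endpoints $\epsilon\omega^{1/2}\omega^k$ are mapped into a region where the modulus bound is uniform. I would handle this by a local change of coordinates straightening $Q_m$ near each root of unity, reducing to a one-variable estimate for $z\mapsto z+z^m/m$ near $z=1$, and then use $T$-fold symmetry to transfer the bound to all $\omega^k$ simultaneously; the global part of the disk is easy since $\sup_{|z|\le r}|Q_m(z)|<r$ for $r$ bounded away from $1$. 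Finally, a short computation recovers $p$ from $q$ via $p(z^T)=q(z)/z$ and confirms $p(1)=1$, $p$ has nonnegative real coefficients, and $\deg p\le N$ in the prescribed range $N\ge T+1$.
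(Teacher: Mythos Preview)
Your overall architecture---take the seed $Q_{T+1}(z)=\frac{T+1}{T+2}\bigl(z+\frac{z^{T+1}}{T+1}\bigr)$, iterate it $j$ times with $(T+1)^j\le N<(T+1)^{j+1}$, and read off $p$ from $q=Q_{T+1}^{\circ j}$---is exactly the paper's construction. What goes wrong is the contraction analysis.

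First, a concrete error: the critical points of $Q_{T+1}$ are \emph{not} the $T$-th roots of unity. Since $Q_{T+1}'(z)=\frac{T+1}{T+2}(1+z^{T})$, the critical points satisfy $z^T=-1$, i.e.\ they are the points $\omega^{1/2}\omega^k$ on the slit rays, not the fixed points $\omega^k$. At the fixed points one has $Q_{T+1}'(\omega^k)=\frac{2(T+1)}{T+2}\ne 0$, so your ``super-attracting'' picture and the local-coordinate program built on it are based on a false premise.

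Second, and more importantly, the delicate boundary analysis you anticipate is unnecessary, and it is not clear your proposed quadratic estimate $|Q_{T+1}(z)|\le 1-c\,\mathrm{dist}(z,\{\omega^k\})^2$ would ever deliver the precise exponent $\gamma$. The paper replaces all of this by two elementary observations. (i) Because $Q_{T+1}$ is \emph{starlike} univalent (not merely univalent), the preimage under $Q_{T+1}$ of the ray $\{r\omega^{1/2}:r\ge 0\}$ is that same ray; hence after any number of iterations the only $z\in\bar\Delta$ with $\arg q(z)=\pi/T$ lie on $z=r\omega^{1/2}$, and by $T$-symmetry the same holds for the other slit directions. (ii) On that ray $z^T=-r^T$ is real and negative, so $Q_{T+1}(r\omega^{1/2})=r\omega^{1/2}\cdot\frac{T+1}{T+2}\bigl(1-\frac{r^T}{T+1}\bigr)$ and one gets the \emph{exact linear} contraction $|Q_{T+1}(r\omega^{1/2})|\le\frac{T+1}{T+2}\,r$. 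Iterating $j$ times gives $|q(r\omega^{1/2})|\le\bigl(\frac{T+1}{T+2}\bigr)^{j}$, and a two-line calculation with $j>\log_{T+1}N-1$ yields $2N^{-\gamma}$ with the stated $\gamma$. No local straightening, no distance-to-boundary estimates, and no tracking of iterates near the fixed points is required.
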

\begin{proof} Let 
$$
p_1(z)=\frac{T+1}{T+2}\left( 1+ \frac{z}{T+1} \right)
$$
and
\begin{equation}
\label{cycle2}
q_1(z)=z p_1(z^T)
\end{equation}

Then, according to the lemma 2,  $q_1(z)$ is univalent in $ \Delta$. It is clear also, that $ q_1(\Delta) \subset   \Delta$, $q_1(\omega^k)=\omega^k$ and 
\begin{equation}
\label{cycle 3}
q_1(r \omega ^{1/2} \omega^k)= r \omega ^{1/2} \omega^k p_1(-r^T)
\end{equation}
for every $k=0,1, \dots, T-1$.

Without loss of generality we can concentrate only for $k=0$, i.e. for the ray $\lbrace z : \arg z = \frac{\pi}{T}\rbrace $.  

The function $q_1$ maps a segment  
$$
\lbrace z : z= \omega ^{1/2} r ,  0  \leq r \leq 1\rbrace 
$$ 
one to one (because it is univalent) to the segment 
$$
\lbrace z : z= \omega ^{1/2} r ,  0 \leq r  \leq \frac{T}{T+2} \rbrace 
$$
and
\begin{equation}
\label{cycle 4}
\vert q_1(r \omega ^{1/2} ) \vert \leq \frac{T+1}{T+2} r.
\end{equation}

Let 
\begin{equation}
\label{fl0}
(T+1)^m \leq N < (T+1)^{m+1}
\end{equation} 
 for some $m=1,2, \dots $. Define
\begin{equation}
\label{fl1}
q(z)=q_1^m(z).
\end{equation}
Then straightforward calculations showed, that $q(z)= z p(z^T)$ for some polynomial $p$ of degree at most $(T+1)^m$ with non-negative coefficients and $p(1)=q_1^m(1)=q_1(1)=1$. 

Hence, according to (\ref{fl0}), $p \in \mathcal{P}_N^\mathbb{R+} $ and we have to prove only  (\ref{flth}) (for k=0).

The main (but simple) observation is that $\arg q(z)=\frac{\pi}{T}$ \textbf{only} for $z=r\omega^{1/2}$. 

Indeed, let for some $z \in \bar{\Delta}$ we have $\arg q(z)= \pi / T$ (which means that $q(z) \in [0, \omega^{1/2}]$). Then $q_1(q_1^{m-1}(z))\in [0, \omega^{1/2}]$. But $q_1(w)\in [0, \omega^{1/2}]$ \textbf{if and only if }$w \in [0, \omega^{1/2}] $. The \textbf{''if''} part follows from (\ref{cycle 3}) and \textbf{''only if'' } from lemma 2, because $q_1$ is \textbf{starlike} univalent.

 Hence $q_1^{m-1} (z) \in [0, \omega^{1/2}]$ and we can continue until we obtain $z \in [0, \omega^{1/2}]$ and we done.
 
 Thus, in order to prove (\ref{flth})  (for $k=0$) we can consider only points $z=r \omega^{1/2}$.
 
 Then  (\ref{cycle 3}) and  (\ref{cycle 4}) implies
 $$
 \vert q(r \omega^{1/2}) \vert \leq \vert q_1( q_1^{m-1}(\omega^{1/2}) )\vert \leq \left( \frac{T+1}{T+2} \right) \vert q_1^{m-1}(\omega^{1/2}) \vert \leq \dots \leq \left( \frac{T+1}{T+2} \right)^m
$$
But  (\ref{fl0}) implies that  
$$
m >\log_{T+1}N -1
$$ 
and hence
$$
\left( \frac{T+1}{T+2} \right)^m = \left( 1 + \frac{1}{T+1} \right)^{- (T+1)\frac{m}{T+1}}<\left( \frac{1}{2}\right) ^ \frac{m}{T+1} < \left( \frac{1}{2}\right) ^ \frac{\log_{T+1}N -1}{T+1} < 2N^{-\gamma}
$$
for $\gamma= \frac{1}{(T+1) \log_2 (T+1)}$.
\end{proof}
We omit the details how this implies stabilization properties of the corresponding set.

 \section{Some classical solutions.} 
 \label{sec_solut_al_suf}
 
\subsection{Suffridge polynomials}

Ted Suffridge in \cite{suffridge} (see also \cite{duren},  p.  268)  constructed an univalent polynomials
 \begin{equation}
 \label{suf1}
 P(z)=P(z;n,1)= \sum_{k=1}^{n} (1- \frac{k-1}{n})\frac{\sin \frac{k \pi}{n+1}}{\sin \frac{\pi}{n+1}} z^k
 \end{equation}
Elementary calculations (see \cite{suffridge} ) showed that 
\begin{equation}
\label{suff1}
P(e^{i\theta})=\frac{n+1}{2n(\cos\theta-\cos\frac{\pi}{n+1})}+ i \frac{\sin \theta (1+e^{i(n+1)\theta})}{2n(\cos\theta-\cos\frac{\pi}{n+1})}
\end{equation}
Hence 
\begin{equation}
\label{suff2}
P(1)=\frac{n+1}{2n(1-\cos\frac{\pi}{n+1})}
\end{equation}
and
\begin{equation}
\label{suff3}
P(- 1)=-\frac{n+1}{2n(1+\cos\frac{\pi}{n+1})}
\end{equation}
Since $P(z)$ is univalent polynomial with real coefficients (so with symmetrical image) this implies 
\begin{equation}
\label{suff3}
P(\Delta)\subset \Omega( {-\gamma})
\end{equation}
with
$$
\gamma=\frac{n+1}{2n(1+\cos\frac{\pi}{n+1})}
$$
Here $\Omega( {-\gamma})$ is a slit domain, i.e. $\mathbb{C}$ with the slit from $-\infty$ to $-\gamma$.

Define polynomial
$$
q(z)=\frac{2n(1-\cos\frac{\pi}{n+1})}{n+1} P(z)
$$
Then, (\ref{suff2}) implies that $q \in \mathcal{Q}_n$ and from( \ref{suff3}) we have
\begin{equation}
\label{suff4}
q(\Delta)\subset \Omega_{-\lambda}
\end{equation}
for 
$$
\lambda=\lambda(n) =\frac{1-\cos\frac{\pi}{n+1}}{1+\cos\frac{\pi}{n+1}}=\tan^2\frac{\pi}{2(n+1)}
$$
This fact was rediscovered in \cite{dm1}.

Obviously 
$$
n \sim  \frac{\pi}{2\sqrt{\lambda}}           ~~~~~~~     (n\to \infty)
$$
and hence the sequence
$$
a_k=\frac{2n(1-\cos\frac{\pi}{n+1})}{n+1} (1- \frac{k-1}{n})\frac{\sin \frac{k \pi}{n+1}}{\sin \frac{\pi}{n+1}} 
$$
is the $\langle a \rangle$-stabilization sequence for 
any fixed point with multiplier in the real segment  $M=\lbrace z : \Im z =0, -\mu_M \leq \Re z \leq -1 \rbrace$, where  $\mu_M>1$  and
\begin{equation}
\label{sq}
\text{ord} \langle a \rangle  \asymp \sqrt{\mu_M}
\end{equation}

\subsection{Alexander's polynomial}

Define polynomial
$$
q_\mathit{a}(z) =\frac{1}{l(n)} (z+\frac{z^2}{2}+\dots + \frac{z^n}{n})
$$
where $l(n)= 1+ 1/2 + \cdots +1/n$
 which we call  \textit{Alexander's polynomial} since it mapping properties like a complex polynomial was first established in \cite{alex}. There was also observed that $q_a(z)$ is an univalent polynomial. Obviously $q_a\in \mathcal{Q}_n$ and it is easy to see that $q_a(z)$ maps $\Delta$ into the rectangle with the height $\approx (\log n)^{-1}$
 
From this we can conclude that the sequence 
$$
a_k=\frac{1}{l(n) k} ~~~~  \text{for} ~~~~ k=1,2, \dots n
$$
with $n\leq c(\theta) \exp \mu_M$ is the $ \langle a \rangle$ stabilization sequence for any unstable fix point  with multiplier $\mu$ in the set $M$, where  $M=\lbrace z ; \vert z \vert \leq \mu_M, | \arg \mu | \geq \theta \rbrace $ is a (wide) sector of a (big) circle with radius $\mu_M$.  
 
We shall claim also that $q_a(z)$ has the smallest area of $q(\Delta)$ among all univalent polynomials from $q \in \mathcal{Q}_n$. Indeed, in this case
\begin{equation}
\label{area1}
area(q(\Delta) )\geq \frac{\pi }{\l(n)}
\end{equation}
and this inequality is sharp over univalent polynomials, since for $ q_\mathit{a}(z)$ we have equality in (\ref{area1}).

To prove this  one can observe that  (\cite{biber} p. 134)
\begin{equation}
\label{area2}
area (q(\Delta ) )= \int_0^1 \int_0^{2 \pi} \vert q'(r e ^{i\varphi }) \vert ^2 r d\varphi dr=
\end{equation}
$$
\int_0^1 \int_0^{2 \pi} q'(r e ^{i\varphi }) \bar{ q'(r e ^{i\varphi }) }r d\varphi dr=2 \pi \int_0^1( \vert q_1 \vert^2 r + 2^2 \vert q_2\vert^2 r^3 + 3^2 \vert q_3 \vert^2r^5 + \dots + n^2 \vert q_n \vert^2 r^{2n-1}) dr=
$$
$$
\pi ( \vert q_1 \vert^2  + 2 \vert q_2\vert^2 + 3 \vert q_3 \vert^2 + \dots + n \vert q_n\vert^2 ) 
$$
By the Cauchy�-Schwarz inequality and (\ref{area2}) 
\begin{equation}
\label{area3}
1=\vert \sum_{k=1}^{n} q_k\vert  \leq \sum_{k=1}^{n} \vert q_k \vert k^{1/2}k^{-{1/2}}\leq \left( \frac{area (q(\Delta ) )}{\pi}\right)^{1/2}( l(n))^{1/2}
\end{equation}
which imply (\ref{area1}).

\section{General solution for simply connected domain $\Omega$} 
In this section we shall use approximation theorem of V. Andrievskii and S. Ruschenweyh \cite{andru} to construct almost optimal polynomials for every simply connected domain $\Omega=\Omega_{M^*}$ .

We call this fact Theorem A and include it (without proof) in Appendix.

Let $\Omega$ be a simple connected domain in $\mathbb{C}$ contained points $0$ and $1$. Our aim is to find polynomial $q(z)$ of the possible smallest degree $n$ such that $q(0)=0$, $q(1)=1$ and 
\begin{equation}
\label{app_incl}
q(\bar{\Delta})\subseteq \Omega
\end{equation}

To find the desired polynomial we use Theorem A, which is  a marvellous  and power tool to construct univalent polynomials with prescribed set of values. The only disadvantage  is that this theorem do not give explicit formulas for coefficients.

Meanwhile, in our opinion, it is not too hard to improve this theorem to obtain the better constant and simply coefficients. This will be a subject of forthcoming paper.
 
\begin{theorem}
\label{theo_general}
Let $\Omega$ be a simply connected domain contained points $0$ and $1$ and $g_\Omega(z)$ be a corresponding  Riemann function normalized by the conditions $g_\Omega^{-1}(0)=0$ and $g_\Omega^{-1}(1)>0$.

Let $r=g_\Omega^{-1}(1)$ and $n$ be a smallest integer number satisfied 
\begin{equation}
\label{choose_number}
n \geq c_0 \max (\frac{1}{1-r},2)
\end{equation}
where $c_0$ is the constant from  Theorem A. 

Then there exist polynomial $q\in \mathcal{Q}_n$ such that
\begin{equation}
\label{inclusion}
q(\bar{\Delta})\subset \Omega
\end{equation}

Moreover, if a domain $\Omega$ is a $T$-symmetric then polynomial $q(z)$ has a real coefficients and l $q\in \mathcal{Q}_m^T$, where the number $m$ satisfied $mT-T+1=n$. In the case that $\Omega$ is symmetric with respect to the real line, then we also can chose $q(z)$ with real coefficients.
\end{theorem}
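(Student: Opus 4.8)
The plan is to feed the Riemann function $g_\Omega$ into Theorem A and then rescale to pin down the value at the point $1$.

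First I would apply Theorem A to $g=g_\Omega$, which is univalent in $\Delta$ with $g(0)=0$, using the integer $n$ prescribed by (\ref{choose_number}). This produces a univalent polynomial $p_n$ of degree at most $n$ with $p_n(0)=0$ and
$$
g_\Omega\!\left(\Bigl(1-\tfrac{c_0}{n}\Bigr)z\right)\ \prec\ p_n(z)\ \prec\ g_\Omega(z).
$$
The right-hand subordination gives at once $p_n(\Delta)\subseteq g_\Omega(\Delta)=\Omega$. For the left-hand one, write $g_\Omega((1-c_0/n)z)=p_n(\phi(z))$ with $\phi:\Delta\to\Delta$, $\phi(0)=0$; since (\ref{choose_number}) forces $c_0/n\le 1-r$, i.e. $r\le 1-c_0/n$ (and $r<1-c_0/n$ after enlarging $n$ by $1$ if this happens to be an equality), the number $r'=r/(1-c_0/n)$ lies in $\Delta$, and $w_0:=\phi(r')$ satisfies $|w_0|\le|r'|<1$ by Schwarz's lemma while $p_n(w_0)=g_\Omega(r)=1$; note $w_0\neq0$ since $p_n(0)=0\neq1$.

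Next I would normalize by setting $q(z)=p_n(w_0 z)$. Then $q$ is a polynomial of degree $\le n$ with $q(0)=0$, $q(1)=p_n(w_0)=1$ and $q'(0)=w_0p_n'(0)\neq0$, so $q\in\mathcal{Q}_n$. Moreover $w_0\bar\Delta$ is a closed disc of radius $|w_0|<1$, hence a compact subset of $\Delta$, so $q(\bar\Delta)=p_n(w_0\bar\Delta)$ is a compact subset of $p_n(\Delta)\subseteq\Omega$; thus $q(\bar\Delta)\subset\Omega$, which is (\ref{inclusion}). This settles the main assertion.

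Finally, the symmetry refinements. If $\Omega$, and hence $g_\Omega$, is $T$-symmetric, then Theorem A --- applied in its $T$-symmetric version, or via the de-symmetrization $w\mapsto(g_\Omega(w^{1/T}))^T$, which is again univalent --- can be arranged to output a $T$-symmetric $p_n$; since $z\mapsto w_0z$ preserves $T$-symmetry, $q$ involves only the powers $z^{1},z^{T+1},\dots$, and writing $n=mT-T+1$ we get $q\in\mathcal{Q}_m^T$. If instead $\Omega$ is symmetric about $\mathbb{R}$, then the normalization $g_\Omega^{-1}(0)=0$, $g_\Omega^{-1}(1)=r>0$ together with uniqueness of the Riemann map force $\overline{g_\Omega(\bar z)}=g_\Omega(z)$; running Theorem A so as to respect this symmetry makes $p_n$ real and $\phi$, hence $w_0$, real, so $q$ has real coefficients. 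I expect the main obstacle to be not the geometric core (the sandwich plus the rescaling is short) but the bookkeeping around Theorem A: extracting the precise subordination sandwich with the constant $c_0$, dispatching the borderline case $r=1-c_0/n$, and --- above all --- verifying that Theorem A can be run so as to preserve $T$-fold and conjugation symmetry, on which the two ``moreover'' clauses entirely depend.
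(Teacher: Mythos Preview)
Your argument is essentially the paper's: apply Theorem~A to $g_\Omega$, use the subordination sandwich to locate a point $w_0\in\Delta$ with $p_n(w_0)=1$, and rescale $q(z)=p_n(w_0z)$. In fact your version is a bit cleaner than the paper's, which evaluates the Schwarz function at the boundary point $z=1$ (writing $\gamma=\omega(1)$) rather than at an interior point as you do via $r'=r/(1-c_0/n)$.

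On the symmetry clauses you are more cautious than necessary. The paper disposes of them in one line by invoking the explicit form in Theorem~A, namely $q_g(z)=\sum_{k=1}^n \gamma_k\,\hat g_\Omega(k)\,z^k$ with \emph{real} multipliers $\gamma_k$: if $g_\Omega$ is $T$-symmetric (resp.\ has real coefficients) then $q_g$ automatically inherits the same property, with no need for a separate ``$T$-symmetric version'' or a de-symmetrization trick. You could shorten your last paragraph by using this directly; what remains is to check that the rescaling point $w_0$ is real (resp.\ has the right argument), which follows from univalence of $p_n$ and the conjugation symmetry $\overline{p_n(\bar z)}=p_n(z)$ exactly as you indicate.
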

\begin{proof}

In view of Theorem A, and inequality (\ref{choose_number}) there exist polynomial $q_g(z)$ of degree less or equal $n$ with $q_g(0)=0$, and
\begin{equation}
\label{subord_double}
g_\Omega(rz)\prec q_g(z) \prec g_\Omega(z)
\end{equation}

 By definition of subordination $g_\Omega(rz)=q_g(\omega(z))$ for some analytic $\omega : \Delta \to \Delta$, such that $\omega(z) < \vert z \vert $ .
Let $\gamma=\omega(1)   $ and define polynomial $q(z)=q_g(\gamma z)$. Then, obviously $q\in \mathcal{Q}_n$ and since it defined by Theorem A like a linear means of Taylor series of $g_\Omega(z)$, it is $T$-symmetric if $\Omega$ is $T$-symmetric. Thus $q(z)\in\mathcal{Q}_n^T$.

Since $\vert \gamma \vert <1$ we have that $q(\bar{\Delta})=q_g(\gamma \bar{\Delta})\subset \Omega$
\end{proof}

\section{Appendix}

\subsection{Subordination, Lindelof principle, Rogozinski's and Caratheodory's lemmas}

We shall say that (analytic) function $f(z)$  \textit{subordinate} to (analytic) function $g(z)$ in $\Delta$ and denote this relation by  $f(z)\prec g(z)$ if $f(z) = g(\omega(z))$  for some analytic  $\omega : \Delta \to \Delta $, such that $\omega(0)=0$ (\cite{littlewood} p. 163). 

Note that for the univalent $g$ a function $f$  subordinate to $g$ if and only if $f(0)=g(0)$ and 
\begin{equation}
\label{app_a_sub}
f(\Delta) \subseteq  g(\Delta)
\end{equation}

Indeed if (\ref{app_a_sub}) hold,  define a function $\omega(z)=g^{-1} (f(z))$ in $\Delta$. Since it is regular in $\Delta$, maps $\Delta$ in $ \Delta$ and $\omega(0)=0$, we have that $f(z)$ subordinate to $g(z)$ in $\Delta$.

On the other hand, if $f(z)\prec g(z)$ then $f(z) = g(\omega (z))$  for some analytic  $\omega : \Delta \to \Delta $ and (\ref{app_a_sub}) follows from the Schwarz lemma.

 It is not true in general that subordination  $f(z) \prec g(z)$  implies estimate
\begin{equation}
\label{sub_2}
 \vert\hat{f} (n)\vert \leq\vert \hat{g} (n)\vert
\end{equation}
 for all $n$, as a simple example $f(z)=z^2$ and $g(z)=z$ shows. 
 
 Meanwhile (\ref{sub_2}) is always true for $n=1$: if $f$ subordinate $g$ than distortion of $f$ at $0$ is less then distortion of $g$. In other words,
 \begin{equation}
 \label{schwarz}
 \vert \hat{f}(1) \vert \leq \vert \hat{g}(1) \vert
 \end{equation}
 This is the \textit{Lindelof principle}.
 
 The proof is a simple application of \textit{Schwarz lemma}. 
Indeed, if $f$ subordinate $g$ then $f(z)=g(\omega(z))$ and 
 $$
 \hat{f}(1)= f'(0)=g'(0) \omega'(0)=\hat{g}(1) \omega'(0)
 $$
 But, according to the Schwarz lemma (\cite{duren}, p. 3) $\vert \omega(z) \vert \leq \vert z \vert $ and hence
 $$
 \vert \omega ' (0) \vert \leq 1
 $$
 
 It seems surprisingly that for \textit{any pair }of subordinate functions we still can give estimate  of coefficients at least in average. This is \textit{Rogozinski's inequality }  (\cite{littlewood}, p. 168 or \cite{duren}, p. 192) which we shall use in the sequel and stand without proof like
\begin{rogosinskii} 
If $f(z) \prec g(z)$ then
 \begin{equation}
 \label{rogoz}
 \sum_{k=1}^n \vert \hat{f}(k) \vert ^2 \leq \sum_{k=1}^n \vert \hat{g}(k) \vert ^2
 \end{equation}
\end{rogosinskii}
 
Let now function $f $ is subordinate to the \textit{univalent} function $g$ and the range $g (\Delta)$  \textit{ is convex}. Normalize $g$  by the conditions: $g(0)=0$ and $g'(0)=1$. Then all coefficients of $f$ are bounded by $1$. 
 
There is a simple proof of this remarkable fact, due to \textit{\textbf{Rogosinski}} (\cite{duren}, p. 195) .

Consider a transformation of $f$ which kills all coefficients $\hat{f}(k)$ for $k \notin n\mathbb{Z} $ and do not change coefficients for  $k \in n\mathbb{Z} $ :
\begin{equation}
\label{killer}
f_n(z)=\frac{1}{n}\sum_{k=0}^{n-1} f(\omega_n^k z)
\end{equation}
where $\omega_n=e^{i \frac{2\pi}{n}}$. Since $\Omega=g(\Delta)$ is convex, $f_n(z)=h(z^n)=\hat{f}(n)z^n+\hat{f}(2n)z^{2n} + \dots $ maps $\Delta$ into $\Omega$. Hence $h$ also maps $\Delta$ into $\Omega$ and it follows that $h \prec g$. Therefore, by Lindelof principle (\ref{schwarz}) $\vert \hat{f}(n) \vert = \vert\hat{h}(1)\vert \leq\vert \hat{g}(1)\vert =1$.

The extremal case is when  $\Omega$ is a half-plane $\Pi_{-1/2} = \lbrace z : \Re z > -1/2 \rbrace$ and $g$ is a conformal mapping of $\Delta$ onto $\Pi_{-1/2} $, such that $g (0)=0$, $g'(0)=1$. 

It  is easy to see that
 $$
g(z)=\frac{1}{2}(\frac{1+z}{1-z}-1)=\frac{z}{1-z}=z+z^2+z^3+ \dots
$$
and according to Schwarz lemma there is only one such mapping.

We see that
\begin{equation}
\label{g_by_one}
\hat{g}(k)=1~~~~ \mbox{for all natural}~~ k
\end{equation}
and in accordance with the general philosophy, that \textit{restriction on the image implies restriction on coefficients}, for \textit{any function which map} $\Delta$ \textit{into }$\Pi_{-\frac{1}{2}}$ \textit{and} $f(0)=0$ all Taylor coefficients is bounded by $1$.

This particular case (of general fact for convex domains) is called \textit{\textbf{Caratheodory's lemma} }(\cite{duren}, p. 41) which we shall use in the sequel and state as
\begin{caratheodory}
\label{lemma_car}
Let for analytic function $f$ 
$$
f(\Delta) \subseteq \Pi_{-\frac{1}{2}}
$$ 
and $f(0)=0$ . Then
\begin{equation}
\label{general_by_one}
\vert \hat{f}(k)\vert \leq 1~~~~ \mbox{for all natural}~~ k
\end{equation}
\end{caratheodory}

\subsection{ $T$-symmetric functions, $T$-th root transform, $T$-fold symmetry and Koebe function $k_T(z)$}

Function $g$ which is regular  in $\Delta$ with the  spectrum in the arithmetic progression $T\mathbb{Z} +1$, which means that $g(z)$ is of the form
\begin{equation}
\label{sym1}
g(z)= g_1z + g_{T+1}z^{T+1} + g_{2T+1} z^{2T+1} + g_{3T+1}z^{3T+1} +   \dots 
\end{equation}
is called \textit{ $T$-symmetric} ( see \cite{hayman}, p. 95).

This definition motivated by the fact that if function $g$ is $T$-symmetric  then the image $g(\Delta)$ has  $T$-fold rotational symmetry, that is invariant under the rotation $w\rightarrow \omega_T  w$, where $\omega_T=e^\frac{2\pi i}{T}$ is the $T$-root of unity. 

Indeed, if $w \in g(\Delta)$ then there is a point $ z\in \Delta$ such that $w=g(z)$ and according to (\ref{sym1})
$$
w= \omega_T^{-1} g(\omega_T z )
$$
Since $\omega_T z \in \Delta$ we have  $\omega_T w \in g(\Delta)$.

We stress out that converse  is not true. Indeed, the circle $\Delta$ obviously has a $T$-fold symmetry for any $T$ but $g(z)=z^n$ maps $\Delta$ onto $\Delta$ also  for any $n$.

Meanwhile, if the function $g(z)$ is\textit{ univalent} then converse is true.

We need this (well-known) fact later and include the proof.

\begin{lemma}
\label{lemma_T-symmetric_converse}
Let $g(z)$ be an univalent function, such that $g(0)=0$ and $g(\Delta) =\Omega$, where $\Omega$ has $T$-fold rotational symmetry. Then $g(z)$ is $T$-symmetric function.
\end{lemma}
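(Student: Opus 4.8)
The plan is to exploit the essential uniqueness of the Riemann map. First I would note that $\Omega=g(\Delta)$ is a \emph{simply connected proper} subdomain of $\mathbb{C}$: it is homeomorphic to $\Delta$ (since a univalent map is a homeomorphism onto its image), and it cannot be all of $\mathbb{C}$, because there is no univalent map of $\Delta$ onto $\mathbb{C}$. Hence the Riemann mapping theorem applies, and in particular any two conformal maps of $\Delta$ onto $\Omega$ that send $0$ to $0$ differ by a rotation $z\mapsto e^{i\alpha}z$ of the disc.

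Next I would manufacture a second such map out of the symmetry hypothesis. Since $\Omega$ is invariant under $w\mapsto\omega_T w$, it is also invariant under $w\mapsto\omega_T^{-1}w$, so $\tilde g(z):=\omega_T^{-1}g(\omega_T z)$ is again a conformal map of $\Delta$ onto $\Omega$ with $\tilde g(0)=0$ (here $\omega_T z\in\Delta$ because $|\omega_T|=1$). By the uniqueness just recalled, $\tilde g(z)=g(e^{i\alpha}z)$ for some real $\alpha$. Comparing derivatives at the origin gives $\tilde g'(0)=\omega_T^{-1}\,\omega_T\,g'(0)=g'(0)$ on one side and $e^{i\alpha}g'(0)$ on the other; since $g$ is univalent we have $g'(0)\ne 0$, so $e^{i\alpha}=1$ and therefore $\tilde g=g$. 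Unwinding the definition of $\tilde g$, this is precisely the functional equation
\[
g(\omega_T z)=\omega_T\,g(z)\qquad (z\in\Delta).
\]

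Finally I would read off the spectrum. Writing $g(z)=\sum_{k\ge 1}g_k z^k$ (note $g(0)=0$) and substituting into the functional equation yields $g_k\,\omega_T^{k}=g_k\,\omega_T$ for every $k$, i.e. $g_k(\omega_T^{\,k-1}-1)=0$. Since $\omega_T$ is a primitive $T$-th root of unity, $\omega_T^{\,k-1}=1$ exactly when $k\equiv 1\pmod T$, so $g_k=0$ for all other $k$. Thus $g$ has the form (\ref{sym1}), i.e. it is $T$-symmetric, as claimed.

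The only point that needs genuine care is the uniqueness step: one must be sure that the normalization $g(0)=0$ \emph{together with} the derivative comparison pins the disc rotation down to the identity, rather than merely to some $e^{2\pi i j/T}$, and that $\Omega$ is eligible for the Riemann mapping theorem at all. Both are elementary once spelled out as above, so I do not expect a serious obstacle; the rest is a routine coefficient comparison.
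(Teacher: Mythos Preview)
Your proof is correct and follows essentially the same approach as the paper: both exploit the uniqueness of the Riemann map to identify $\omega_T^{-1}g(\omega_T z)$ with $g$ and then read off the vanishing of the off-progression coefficients. Your version is in fact a bit more streamlined, since you compare derivatives directly to pin down the disc automorphism, whereas the paper first rotates $g$ to achieve the standard normalization $g'(0)>0$ before invoking uniqueness.
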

\begin{proof}
Let
$$
g(z)=g_1z+g_2z^2+g_3z^3+ \dots
$$
Since $g(z)$ is univalent $g_1\neq 0$. 

Define 
$$
g_\theta(z)=g(e^{i\theta} z)= g'_1z+ g'_2z^2+g'_3z^3+ \dots 
$$
where $\theta$ is chosen in such a way, that $g'_1 >0$. We claim, that
\begin{equation}
\label{T-symm_mod}
\vert g'_k \vert =\vert g_k \vert
\end{equation}
Function $g_\theta(z)$ is univalent, maps $\Delta$ onto $\Omega$, $g_\theta(0)=0$ and $g'_\theta(0) > 0$. Thus $g_\theta (z)$ is the \textit{unique} Riemann mapping of $\Delta$ onto $\Omega$.

Define
$$
g^*(z)=\omega^{-1}_T g_\theta (\omega_T z)=
$$ 
$$
g'_1z+ \omega_Tg'_2z^2+\omega^2_Tg'_3z^3+ \dots  + \omega^{T-1}_Tg'_Tz^T+
$$
$$
g'_{T+1}z^{T+1}+ \omega_Tg'_{T+2}z^{T+2}+\omega^2_Tg'_{T+3}z^{T+3}+ \dots  + \omega^{T-1}_Tg'_{2T}z^{2T}+\dots 
$$
$$
g'_{mT+1}z^{mT+1}+ \omega_Tg'_{mT+2}z^{mT+2}+\omega^2_Tg'_{mT+3}z^{mT+3}+ \dots  + \omega^{T-1}_Tg'_{(m+1)T}z^{(m+1)T}+\dots 
$$
Since $\Omega$ has a $T$-fold rotational symmetry, $g^*(z)$ also maps $\Delta$ onto $\Omega$ conformally and because  $g^*(0)=g_\theta(0)=0$, $(g^*(0))'=g'_\theta(0)$ we have
$$
g^*(z)\equiv g_\theta(z)
$$
and hence for every $m=1,2,3,\dots$ and $j=2,3,\dots T$
\begin{equation}
\label{lemma_T-symmetric_coef}
\omega^{j}_Tg'_{mt+j}=g'_{mt+j}
\end{equation}
Since $\omega^{j}_T\neq 0$ for every $j=2,3,\dots T$  we conclude from (\ref{lemma_T-symmetric_coef}) that $g'_{mt+j}=0$ for every $m=1,2,3,\dots$ and $j=2,3,\dots T$. Hence (\ref{T-symm_mod}) implies that $g(z)$ is $T$-symmetric.
\end{proof}

Let now $g$ be an \textit{univalent} function in $\Delta$, such that $g(0)=0$. We define the \textit{$T$-th root transform} of $g$ by the formula
$$
g_{1/T} (z) =(g(z^T))^\frac{1}{T}
$$ 
(see \cite{duren} p. 28, where there is also the words of explanation, which we briefly repeat here). 

Since $g$ is univalent the image $g(\Delta)$ is a simple connected domain contained zero and
$$
g(z)=\hat{g}(1) z+ \hat{g}(2) z^2+ \hat{g}(3) z^3 + \cdots
$$
with $\hat{g}(1) \neq 0$.
Hence $g(z^T)$ covered $g(\Delta)$ exactly $T$ times and 
$$
g_{1/T} (z) =z(\hat{g}(1) +\hat{g}(2) z^T+ \hat{g}(3) z^{2T} + \cdots)^\frac{1}{T}=zh^\frac{1}{T}   (z)
$$
Since there is only one point in $\Delta$  where $g (z)$  is  equal to $0$ and this is $0$,  we can define the $T$-root of $h(z)$ near zero (where it is equal $\hat{g}(1) \neq 0$)  by the power series. It radius of convergence equal the distance to the closest singular point, which is the point where $h(z)=0$. But there is no other such points in $\Delta$, except zero. Hence the radius of convergence is not less  that $1$ and the root transform is well defined in $\Delta$. Since the power series contains only terms with $z^{kT+1}$, we conclude that $T$th-root transform  is a $T$-symmetric function.

It is clear also that $g_{1/T}(z)$ is univalent in $\Delta$.

Let now 
$$
r(z)=r_0 + r_1z + \dots + r_{n} z^{n}
$$
be a polynomial and
$$
q(z)=z(r(z))^T
$$

It is not hard to see that for the polynomial of this \textit{special kind} the $T$root transform defined correctly and 
$$
q_{1/T}(z)=zr(z^T)
$$

Let $\lambda>0$  and $\zeta\in \partial \Delta$. Define the ray 
\begin{equation}
\Gamma_\zeta(\lambda)=\lbrace z: \arg z =\arg \zeta ~~~ \text{and} ~~~ \lambda\leq  \vert z \vert \rbrace 
\end{equation}

For $T=1,2, \dots $ we  define the $T$-slit domains
$$
\Omega_T (\lambda)= \hat{\mathbb{C}}  \setminus \bigcup _{k=0}^{T-1}\Gamma_{\omega^{1/2}_T\omega_T^k} (\lambda) 
$$
Since the Koebe function
$$
k(z)=\frac{4z}{(1-z)^2}=(\frac{1+z}{1-z})^2 -1
$$
is a composition of linear transform from $\Delta$ onto $\Pi$,  mapping $w \to w^2$ and shift  by 1 to the left, it maps a unit circle $\Delta$ onto $ \Omega_1(1)$   \textit{conformally} . 

Let
\begin{equation}
\label{sym2}
k_T(z)=\frac{2^\frac{2}{T}z}{(1-z^T)^\frac{2}{T}}
\end{equation}
be a $T$th root transform of Koebe function. Then it is $T$-symmetric and hence maps $\Delta$ onto the slit domain $\Omega_T(1)$. Since Koebe function is univalent the same is $k_T(z)$ and hence this mapping is also conformal.

It is clear that 
\begin{equation}
\label{sym3}
k_T(z)= \sum_{k=0}^\infty c_k z^{kT+1}
\end{equation}
where
\begin{equation}
\label{sym4}
c_k=(-1)^k 2^\frac{2}{T} \binom {-\frac{2}{T}} k
\end{equation}
 Special cases are $T=1$, or the Koebe function, when $c_k=4k$ and  $T=2$, when $c_k=1$.

 There are few cases for so simple formulas, meanwhile for every $T$ we can  estimate coefficients as follows
 
 \begin{equation}
\label{sym5}
\vert c_k \vert \leq C_T  k^{\frac{2}{T} - 1}
\end{equation}
 
 Indeed,  this is a straightforward corollary  from the well known limit
 \begin{equation}
\label{sym6}
\lim_{n \to \infty} \frac{n! n^{z-1}}{(z)_n}=\Gamma (z)
\end{equation}
where $(a)_n=a(a+1)(a+2) \dots (a+n-1)$ is shifted factorial.

 \begin{remark}
In the view of the previous estimates it is natural to guess that for \textit{any} $T$-symmetric function, i.e. of the form (\ref{sym1}) which is \textbf{univalent}
$$
\mid \hat{q} (k) \mid \leq C_Tk^{\frac{2}{T}-1}
$$
This was a conjecture of G. Szego and was proved for  by J. Littlewood (1925) for $T=1$, by J. Littlewood and Paley (1932) for $T=2$, by V. Levin (1934) for $T=3$ and by A. Baernstein (1986) for $T=4$.  But J. Littlewood (1938) showed that it is false for sufficient large $T$ and later Ch. Pommerenke (1975) proved that conjecture is \textbf{false} for all $T\geq 12$. The cases $5\leq T \leq 11$ remain open (see \cite{hayman}, p.96)
\end{remark}
 
\subsection{Approximation Theorem of Andrievskij and Ruschenweyh}  Let $\Omega$ be a simple connected domain contained $0$ and $g_\Omega(z)$ be a corresponding \textit{Riemann function} which map $\Delta$ to $\Omega$ conformally and normalized by the conditions $g_\Omega(0)=0$ and $g_\Omega'(0)$ is positive and real.
Scwartz lemma implies that it is unique.

Next theorem, proved by V. Andrievskii and S. Ruschenweyh tell us how to approximate conformal mapping by polynomial map from inside.

\begin{theorem A}[ \cite{andru}] 
There exist a universal constant $c_0$, such that for every simply connected domain $\Omega$ contained point $0$ and corresponding Riemann function $g_\Omega(z)$ and for every $n\geq 2 c_0$ there exist a (univalent) polynomial of degree at most $n$ such that
\begin{equation}
g_\Omega((1-\frac{c_0}{n})z)\prec q(z) \prec g_\Omega(z)
\end{equation}
and moreover 
\begin{equation}
q(z)=\sum_{k=1}^n \gamma_kg_k z^k
\end{equation}
where
$$
g_\Omega(z)=\sum_{k=1} ^\infty g_kz^k
$$
and coefficients $\gamma_k$ are real.
\end{theorem A}

\begin{remark} As was pointed out in the survey \cite{andru1}, p. 40,  R. Grainer  proved that $\pi\leq c_0\leq 73$.
\end{remark}

\end{document}